\newtheorem{prev}{Theorem}
\newtheorem{theorem}{Theorem}
\newtheorem{lemma}{Lemma}
\newtheorem{prop}{Proposition}
\newtheorem{guess}{Conjecture}
\crefname{prev}{Theorem}{Theorems}
\def\leq{\leqslant}
\def\geq{\geqslant}
\def\emptyset{\mbox{{\rm \O}}}
\def\bar{\overline}
\def\gcd{{\rm gcd}}
\def\sup{{\rm sup}}
\def\Z{\mathbb{Z}}
\def\R{\mathbb{R}}
\def\ps@pprintTitle{%
 \let\@oddhead\@empty
 \let\@evenhead\@empty
 \def\@oddfoot{\centerline{\thepage}}%
 \let\@evenfoot\@oddfoot}
\begin{document}

\begin{frontmatter}

\title{Sequences of Integers with Three Missing Separations\tnoteref{funding}}
\tnotetext[funding]{This work was supported by the National Science Foundation [grant number DMS-1600778]; and NASA [grant number NASA MIRO NX15AQ06A].}

\author[csula]{Daphne~Der-Fen~Liu \corref{cor1}}
\ead{dliu@calstatela.edu}

\author[csula]{Grant Robinson \corref{cor2}}
\ead{grobins4@calstatela.edu}

\address[csula]{Department of Mathematics, California State University Los Angeles, \\5151 State University Drive, Los Angeles, CA 90032-8204}
\cortext[cor1]{Principal corresponding author}
\cortext[cor2]{Corresponding author}

\begin{abstract} 
Let $D$ be a set of positive integers. We study the maximum density $\mu(D)$ of integral sequences 
in which the  separation between any two terms does not fall in $D$. The sets $D$ considered in this article are 
mainly of the form $\{1,j,k\}$. 
The closely related function $\kappa(D)$, the parameter involved in the ``lonely runner conjecture,'' is also investigated. Exact values of $\kappa(D)$ and $\mu(D)$ are found for many families of $D=\{1,j,k\}$. In particular, we prove that the boundary conditions in some earlier results of Haralambis (1977) are sharp. Consequently, our results declaim two recent conjectures of Carraher et al. (2016).  
\end{abstract}

\begin{keyword}
    Density of sequences \sep lonely runner conjecture \sep distance graphs \sep fractional coloring \MSC 05C15 \sep 11B05
\end{keyword}

\end{frontmatter}

\section{Introduction}

Let $D$ be a set of positive integers called a $D$-set. 
A sequence $S$ of non-negative integers is called a {\em $D$-sequence} if $|x-y| \not\in D$ for any $x, y \in S$.  Denote $|S \cap \{0,1,2,\cdots,n\}|$ as $S[n]$. The upper density $\bar{\delta}(S)$ and the lower density $\underline{\delta}(S)$ of $S$ are defined, respectively, by
$\bar{\delta}(S)=\bar{\rm lim}_{n \to \infty} S[n]/(n+1)$ and
$\underline{\delta}(S)=\underline{\rm lim}_{n \to \infty} S[n]/(n+1).$
We say $S$ has a density if
$\bar{\delta}(S)=\underline{\delta}(S)$, and we denote this common value as $\delta(S)$.  The parameter of interest is the {\it density of $D$}, denoted by $\mu(D)$, and defined as 
\[
\mu(D):= {\rm sup} \ \{\ \delta(S): \ S \mbox{\rm \ is a $D$-sequence}\}.
\]

The parameter $\mu(D)$ is closely related to another parameter of $D$ involved in the so called ``lonely runner conjecture". For a real number $x$, let $||x||$ denote the minimum distance from $x$ to an integer, that is $||x|| = \min \{\lceil x \rceil - x, x - \lfloor x \rfloor\}$.  For any real $t$, denote by $||tD||$ the smallest value $||td||$ among all $d \in D$.  The {\it kappa value of $D$}, denoted by $\kappa(D)$, is the supremum of $||tD||$ among all real $t$.  That is,
\[
\kappa(D) := \ \sup \{|| tD || \colon t \in \R \}.
\]
Wills \cite{W} conjectured that $\kappa(D) \geq 1/(|D|+1)$ is true for all finite sets $D$.  This conjecture was named the {\it lonely runner conjecture} by Bienia et al. \cite{lonely}.  Suppose $m$ runners run laps on a circular track of unit circumference. Each runner maintains a constant speed, and the speeds of all the runners are distinct. A runner is called {\it lonely} if the distance on the circular track between him and every other runner is at least $1/m$. Equivalently, Wills' conjecture asserts that for each runner, there exists some time $t$ when he becomes lonely.  The conjecture has been proved true for $|D| \leq 6$ (cf. \cite{7runners, 6runners1, C3, cp}), and remains open for $|D| \geq 7$.

The problem of determining $\mu(D)$ was initially posed by Motzkin in an unpublished problem collection (cf. \cite{CG}). In 1975, Cantor and Gordon \cite{CG} proved that  
\begin{equation}
\mu(D) \geq \kappa(D),  \label{eq:1}
\end{equation}
the fundamental connection between these two concepts, by showing that a ``good time'', that is a $t \in \R$ such that $||tD||$ is maximized, can be used to create a $D$-sequence (cf.  
\cref{alg:kappa}). It was also proved that, for any finite $D$-set, 
$\mu(D)$ exists and this maximal density is rational and achieved by a periodic sequences with period length at most $2^{\max(D)}$. Using extremal walks in finite digraphs, Carraher et al. \cite{conjecture} proved a similar result with periodic sequence of length at most $(\max(D)2^{\max(D)})$. And finally,  for $D = \{a, b\}$,   
Cantor and Gordon \cite{CG} showed that $\kappa(D) = \mu(D) = \lfloor \frac{a+b}{2} \rfloor/(a+b)$.

For $D$-sets with more than two elements, $\mu (D)$ and $\kappa(D)$ have  been investigated intensively (cf. \cite{conjecture, chz,clz, chen2, chen3, chen4, cch, Collister, Gupta, H, surveyL,LZ,PT,WL,pattern}). Readers are referred to the survey \cite{surveyL}.  For three-element $D$-sets, if $D = \{a, b, a+b\}$,  Liu and Zhu \cite{LZ} showed that $\kappa(D) = \mu(D)$,  and determined the exact value. 

\begin{prev}{\rm \cite{LZ}}
\label{M}
Suppose $D = \{a,b,a+b\}$ with {\rm gcd}$(a,b)=1$. Then
\[
\mu(D) = \kappa(D) = \max\left\{\frac{ \ \lfloor\frac{2b+a}{3}\rfloor \ }{2b+a},\ \frac{\ \lfloor\frac{2a+b}{3}\rfloor \ }{2a+b}\right\}.
\]
\end{prev}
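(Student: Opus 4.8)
The strategy is to prove the two inequalities $\kappa(D)\ge\max\{\cdots\}$ and $\mu(D)\le\max\{\cdots\}$ separately; combined with $\mu(D)\ge\kappa(D)$ from \eqref{eq:1}, they pinch both parameters to the common value $\max\{\cdots\}$ and force the asserted double equality.

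For the first inequality it suffices, by \eqref{eq:1}, to exhibit for each of $N_1=2b+a$ and $N_2=2a+b$ a real $t$ with $\|tD\|=\lfloor N_i/3\rfloor/N_i$. Take $N=N_1$. Since $\gcd(b,N)=\gcd(b,a)=1$, the class of $b$ is a unit modulo $N$, so I can choose $m$ with $mb\equiv\lfloor N/3\rfloor\pmod N$ and set $t=m/N$. Because $a\equiv-2b$ and $a+b\equiv-b\pmod N$, this gives $\|tb\|=\|t(a+b)\|=\lfloor N/3\rfloor/N$ and $\|ta\|=\bigl\|\,2\lfloor N/3\rfloor/N\,\bigr\|$; a short case check on $N\bmod 3$ shows the latter is at least $\lfloor N/3\rfloor/N$, whence $\|tD\|=\lfloor N/3\rfloor/N$. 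The same computation with $N=N_2$ (now using $\gcd(a,N)=1$, $b\equiv-2a$, $a+b\equiv-a\pmod N$) produces a time achieving $\lfloor N_2/3\rfloor/N_2$. Therefore $\kappa(D)\ge\max\{\lfloor N_1/3\rfloor/N_1,\ \lfloor N_2/3\rfloor/N_2\}$, and \eqref{eq:1} upgrades this to $\mu(D)\ge\kappa(D)\ge\max\{\cdots\}$.

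The real content is the matching bound $\mu(D)\le\max\{\cdots\}$, and this is the step I expect to be hard. By the Cantor--Gordon theorem \cite{CG} quoted above, $\mu(D)$ is attained by a periodic $D$-sequence $S$, and it is convenient to work with its (eventually periodic) sequence of gaps $g_0,g_1,\dots$ between consecutive elements, whose average value is $1/\delta(S)$; the $D$-sequence condition is exactly that no sum of consecutively indexed gaps lies in $\{a,b,a+b\}$. Two structural facts drive the estimate: every maximal run of consecutive integers in $S$ has length at most $\min(a,b)$ (a longer run would contain two elements differing by $a$, resp.\ $b$), so only boundedly many gaps equal $1$; and a short stretch of small gaps forces an immediately following large gap (for instance, when $D=\{1,5,6\}$ one checks that $g_i=g_{i+1}=2$ forces $g_{i+2}\ge 7$). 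The plan is to convert these local observations into an amortized (discharging) argument along the gap sequence — equivalently, a min-mean-cycle computation in the finite digraph of Carraher et al.\ \cite{conjecture} — showing that the average gap is at least $\min\bigl\{(2a+b)/\lfloor(2a+b)/3\rfloor,\ (2b+a)/\lfloor(2b+a)/3\rfloor\bigr\}$, which is precisely the reciprocal of $\max\{\cdots\}$.

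The main obstacle is that the target is a \emph{maximum} of two densities, so no single uniform window length suffices: for $D=\{1,2,3\}$ the extremal density $1/4$ is forced by blocks of length $2a+b=4$, for $D=\{1,3,4\}$ the value $2/7$ by blocks of length $2b+a=7$, and for $D=\{1,5,6\}$ the local pattern $\{0,2,4,\dots\}$ shows that three elements can crowd into a window of length $7$ even though the global density still cannot exceed $2/7$. Hence the amortization cannot be carried out block-by-block; one must split into cases according to the residues of $a$ and $b$ modulo $3$ and according to which patterns of small gaps are compatible with the constraint, charge the ``credit'' of each forced large gap to the short stretch preceding it, and check in every case that the worst admissible gap pattern meets the bound for the appropriate one of $2a+b$, $2b+a$ with equality. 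Doing this case analysis cleanly — in particular, pinning down the finitely many extremal gap patterns and their periods — is the delicate part of the argument, and is also what makes the construction in the first paragraph provably optimal.
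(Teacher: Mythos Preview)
This statement is not proved in the present paper at all: Theorem~A is quoted from Liu and Zhu \cite{LZ} as background, with the remark that one inequality had already been obtained by Rabinowitz and Proulx \cite{RP} and by Gupta \cite{Gupta}. So there is no ``paper's own proof'' to compare your proposal against.

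On its own merits, your lower bound argument is complete and correct: choosing $m$ with $mb\equiv\lfloor N/3\rfloor\pmod{N}$ for $N=2b+a$ (and symmetrically for $N=2a+b$) does give $\|tD\|_N\ge\lfloor N/3\rfloor$, and the residue check on $\|2\lfloor N/3\rfloor\|_N$ is a routine verification. This is essentially the construction that Rabinowitz--Proulx and Gupta use.

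Your upper bound, however, is only a plan. You have correctly located the difficulty --- that the target is a maximum of two densities, so no single block length yields a uniform Haralambis-type bound, and that the case split depends on $a,b\bmod 3$ --- but you have not carried out the amortization you describe, nor identified the extremal gap patterns, nor shown that the discharging actually closes in every residue class. As written this is a reasonable outline, not a proof. It is also worth noting that Liu and Zhu do \emph{not} proceed by discharging on gap sequences: their argument goes through the fractional and circular chromatic numbers of $G(\Z,D)$, exploiting the triangle $\{0,a,a+b\}$ and the resulting clique structure to bound $\chi_f(D)=1/\mu(D)$ from below. If you want to complete a self-contained combinatorial proof along the lines you sketch, you will need to actually execute the residue-by-residue case analysis and exhibit the charging rules; alternatively, the route via \eqref{eq:2} and circulant subgraphs, as in \cite{LZ}, avoids that case work entirely.
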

\noindent
This confirmed a conjecture of Rabinowiz and Proulz \cite{RP}, who had shown one direction of the equality in \cref{M} in their study of the asymptotic behavior of the channel assignment problem.  
The same inequality had also been discovered  independently by Gupta \cite{Gupta}.  

For the family of sets $D=\{1, j, k\}$, the values of $\mu(D)$ were initially studied by Haralambis \cite{H}.
By considering different parities of $j$, the author established the following two results:

\begin{prev} {\rm \cite{H}}
\label{H-odd}
If $D=\{1, j, k\}$, where $1 < j < k$ and $j$ is odd, then $\mu(D) = 1/2$ if $k$ is odd. Otherwise,
$
\mu(D) = \frac{k}{2(k+j)},
$
provided that $k \geq j(j-1)/2$.
\end{prev}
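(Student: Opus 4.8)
The proof naturally splits on the parity of $k$, and in each case into a lower and an upper bound; the lower bounds are explicit constructions, while the upper bounds are the substantive combinatorial part.

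\textit{The case $k$ odd.} Here all three elements of $D=\{1,j,k\}$ are odd, so the set of even non‑negative integers is a $D$‑sequence (all of its pairwise differences are even), giving $\mu(D)\ge 1/2$. Conversely, since $1\in D$ no two consecutive integers lie in a $D$‑sequence $S$, so $S[n]\le\lceil(n+1)/2\rceil$ and $\mu(D)\le 1/2$. Hence $\mu(D)=1/2$.

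\textit{The lower bound when $k$ is even.} I would take the periodic $D$‑sequence
$S=\{0,2,4,\dots,k-2\}+(k+j)\mathbb{Z}_{\ge 0}$,
whose intersection with the $m$‑th block $[(k+j)m,(k+j)(m+1))$ is the $k/2$ even numbers $(k+j)m,(k+j)m+2,\dots,(k+j)m+k-2$; it has density $(k/2)/(k+j)=k/(2(k+j))$. To see it is a $D$‑sequence, a difference of two of its terms lying in blocks $m$ apart equals $(k+j)m+2t$ with $|t|\le k/2-1$: if $m=0$ this is a nonzero even number in $\{2,\dots,k-2\}$, hence none of $1,j,k$; if $|m|=1$ it is odd (because $k+j$ is odd) and lies in $[j+2,2k+j-2]$, hence is $\ne k$ and also avoids $1$ and $j$; and if $|m|\ge 2$ its absolute value exceeds $k$. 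Thus $\mu(D)\ge k/(2(k+j))$. Note that this half uses only that $k$ is even and $j$ odd, not the bound $k\ge j(j-1)/2$.

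\textit{The upper bound when $k$ is even.} By the Cantor--Gordon result quoted above, $\mu(D)$ is attained by a periodic $D$‑sequence $S=\{s_1<s_2<\cdots\}$. Since $1\in D$, every gap $s_{i+1}-s_i\ge 2$, and if $e_i:=s_{i+1}-s_i-2\ge 0$ denotes the excess then $\delta(S)=1/(2+\bar e)$ where $\bar e$ is the average excess over a period; so it suffices to prove $\bar e\ge 2j/k$. I would partition the terms of $S$ into maximal runs in arithmetic progression with common difference $2$. Because $k\in D$ is even, each run has at most $k/2$ terms. If two runs of sizes $\sigma,\sigma'$ are separated by a single excess $f\ge 1$, then the differences between them are exactly $f+2,f+4,\dots,f+2(\sigma+\sigma'-1)$ (all of parity $f$); forbidding $j$ when $f$ is odd and $k$ when $f$ is even forces either a large excess or a small value of $\sigma+\sigma'$, and the same mechanism applies between any run and a later run, with the intervening excesses and run‑lengths now entering the progression. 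The plan is to amortise all of these constraints into the inequality $\sum_r f_r\ge \tfrac{2j}{k}\sum_r\sigma_r$ along any window — equivalently, that every $2(k+j)$ consecutive integers contain at most $k$ terms of $S$ — with equality approached only by the extremal pattern of the construction above (runs of $k/2$ terms separated by gaps of $j+2$).

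\textit{Main obstacle.} The delicate point is that a single pair of adjacent runs yields only a weak local bound: configurations built from several medium‑length runs with small \emph{even} excesses would pack in too many terms if one used only the difference $j$ and adjacency, and they are excluded only by invoking the difference $k$ between runs two or three apart. Organising this — with the parity split ``runs in phase versus out of phase modulo $2$'' and the case analysis on run‑lengths and excesses — into a clean global count is where the real work lies, and it is exactly in resolving the worst such configuration that the threshold $k\ge j(j-1)/2$ appears; for smaller $k$ a denser periodic pattern slips through, which is why the hypothesis is sharp.
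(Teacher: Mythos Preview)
This statement is Theorem~B, which the paper quotes from Haralambis~\cite{H} without proof; there is no argument in the present paper to compare against, so I can only assess your proposal on its own merits.

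Your treatment of the odd-$k$ case and of the lower bound for even $k$ is complete and correct; the periodic sequence $\{0,2,\dots,k-2\}+(k+j)\mathbb{Z}_{\ge 0}$ is the right construction and your verification that all differences avoid $\{1,j,k\}$ is sound.

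The upper bound for even $k$, however, is not a proof but a plan. You set up the run-and-excess framework correctly, reduce the claim to $\bar e\ge 2j/k$, and record the local observation that between adjacent runs of sizes $\sigma,\sigma'$ separated by excess $f$ the pairwise differences form the progression $f+2,f+4,\dots,f+2(\sigma+\sigma'-1)$. But from that point on you only say what you \emph{would} do (``the plan is to amortise\dots'') and then explicitly label the remaining work ``Main obstacle'' without resolving it. The entire content of the theorem lies in that obstacle: one must actually carry out the multi-run case analysis, show concretely how the hypothesis $k\ge j(j-1)/2$ eliminates the competing dense configurations, and convert the informal dichotomy ``either a large excess or a small $\sigma+\sigma'$'' into a quantitative inequality that sums to $\bar e\ge 2j/k$ over a period. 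As written, the substantive half of the theorem is asserted rather than proved.
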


\begin{prev} {\rm \cite{H}}
\label{H-even}
If $D=\{1, j, k\}$, where $1 < j < k$, $j$ is even, and $k=n(j+1) + \overline{k}$ where $0 \leq \overline{k} \leq j$, then $\mu(D) = \frac{j}{2(j+1)}$ if $\overline{k}=1$ or $j$. Otherwise,
\[
\mu(D) =
\frac{nj/2+ [\overline{k}/2]}{k+1},
\]
provided that $n \geq (j-\overline{k}-2)/2$ when $\overline{k}$ is even, and $n \geq (2j-\overline{k}-3)/2$ when $\overline{k}$ is odd.
\end{prev}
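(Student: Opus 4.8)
The plan is to establish the two inequalities $\mu(D)\ge\frac{nj/2+\lfloor\bar k/2\rfloor}{k+1}$ and $\mu(D)\le\frac{nj/2+\lfloor\bar k/2\rfloor}{k+1}$ separately, after first disposing of the exceptional residues $\bar k\in\{1,j\}$. For those: since $\{1,j\}\subseteq D$, every $D$-sequence is also a $\{1,j\}$-sequence, so $\mu(D)\le\mu(\{1,j\})$, and by the two-element formula of Cantor and Gordon \cite{CG} this equals $\lfloor(j+1)/2\rfloor/(j+1)=\frac{j}{2(j+1)}$. For the reverse inequality one checks that the time $t=\frac{j}{2(j+1)}$ satisfies $||t||=||tj||=\frac{j}{2(j+1)}$ and, because $k\equiv\bar k\in\{1,j\}\pmod{j+1}$, also $||tk||=\frac{j}{2(j+1)}$ (a short computation after reducing $tk$ modulo $1$). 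Hence $\kappa(D)\ge\frac{j}{2(j+1)}$, and \eqref{eq:1} yields $\mu(D)=\kappa(D)=\frac{j}{2(j+1)}$; note that no hypothesis on $n$ is needed here. From now on assume $\bar k\notin\{1,j\}$.

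For the lower bound I would again exhibit a good time directly. Set $t^{*}=\frac{nj/2+\lfloor\bar k/2\rfloor}{k+1}$. Since $j$ is even, $nj/2\in\Z$, so $t^{*}(k+1)\in\Z$ and therefore $||t^{*}k||=||{-}t^{*}||=||t^{*}||$; moreover $nj/2+\lfloor\bar k/2\rfloor<(k+1)/2$, so $||t^{*}\cdot 1||=t^{*}=\frac{nj/2+\lfloor\bar k/2\rfloor}{k+1}$. The remaining point is to verify $||t^{*}j||\ge t^{*}$: writing $jk=nj(j+1)+j\bar k$ and reducing $t^{*}j$ modulo $1$ turns this into an elementary inequality between integers, and \emph{this is precisely the step that consumes the hypothesis on $n$} --- the reduction behaves differently according to the parity of $\bar k$, and the inequality holds exactly when $n\ge(j-\bar k-2)/2$ (for $\bar k$ even) and when $n\ge(2j-\bar k-3)/2$ (for $\bar k$ odd), which is the source of the two thresholds. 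Granting this, $||t^{*}D||=\frac{nj/2+\lfloor\bar k/2\rfloor}{k+1}$, so $\kappa(D)\ge\frac{nj/2+\lfloor\bar k/2\rfloor}{k+1}$ and $\mu(D)\ge\kappa(D)$ by \eqref{eq:1}; feeding $t^{*}$ to the construction of \cref{alg:kappa} turns this into an explicit periodic $D$-sequence of period $k+1$ attaining the value.

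For the upper bound the core input is elementary: a $D$-sequence meets any $j+1$ consecutive integers in at most $\lfloor(j+1)/2\rfloor=j/2$ of them, since forbidding the differences $1$ and $j$ makes such a block an independent set of the odd cycle $C_{j+1}$. Partitioning a window of $k+1=n(j+1)+(\bar k+1)$ consecutive integers into $n$ sub-blocks of length $j+1$ together with one residual sub-block of length $\bar k+1$ (in which only the difference $1$ is relevant) gives at most $nj/2+\lceil(\bar k+1)/2\rceil=nj/2+\lfloor\bar k/2\rfloor+1$ elements per window, hence the crude bound $\mu(D)\le\frac{nj/2+\lfloor\bar k/2\rfloor+1}{k+1}$. \textbf{The main obstacle is shaving off this surplus $1$:} one must show that a $D$-sequence cannot realize the maximum $j/2$ in every length-$(j+1)$ sub-block and simultaneously $\lfloor\bar k/2\rfloor+1$ in the residual sub-block of some window. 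I would do this by classifying the (few, up to rotation) maximum independent sets of $C_{j+1}$, showing that a ``full'' sub-block rigidly constrains the admissible patterns in the neighbouring sub-block through the distance-$j$ edges that cross the seam, and propagating this constraint around the window until the distance-$k$ edge joining its two endpoints produces a contradiction; the upshot is that, under the stated hypothesis, every length-$(k+1)$ window contains at most $nj/2+\lfloor\bar k/2\rfloor$ elements, and averaging over windows gives the bound. This propagation argument is exactly what requires the window to be long enough --- i.e. $n$ above the threshold --- and it again divides into the two parity cases of $\bar k$, matching the lower-bound analysis. Combining the two inequalities yields $\mu(D)=\kappa(D)=\frac{nj/2+\lfloor\bar k/2\rfloor}{k+1}$.
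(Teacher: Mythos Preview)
This statement is \cref{H-even}, a result quoted from Haralambis~\cite{H}; the present paper does not supply its own proof but merely cites it as background. There is therefore no proof in the paper to compare your proposal against.

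That said, your plan is in the right spirit and matches the toolkit the paper itself deploys for its new results: the lower bound via an explicit ``good time'' feeding into $\kappa(D)\le\mu(D)$, and the upper bound via block decomposition into length-$(j+1)$ windows combined with \cref{H}. Two remarks on the sketch itself. First, your claim that the verification of $||t^{*}j||\ge t^{*}$ is ``precisely the step that consumes the hypothesis on $n$'' is correct and worth carrying out in full: for instance with $j=6$, $\bar k=3$, $n=1$ (below the threshold $n\ge 3$) one gets $t^{*}=4/11$ but $||t^{*}j||=2/11<t^{*}$, so the bound genuinely fails there. Second, your upper-bound sketch is the less developed half: the assertion that a ``full'' block rigidly constrains its neighbour and that this propagates to a contradiction with the distance-$k$ edge is the heart of the matter, and it is not obvious a priori that the same threshold on $n$ emerges from this side of the argument as from the $\kappa$ side. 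Indeed, the paper's own \cref{new} shows that for $\bar k=3$ and $n$ below the threshold the true value of $\mu(D)$ \emph{exceeds} the formula $\frac{nj/2+1}{k+1}$, confirming that it is the upper bound, not the lower, that breaks first --- so the propagation step really does need the full hypothesis, and writing it out carefully (with the parity split on $\bar k$) is where the work lies.
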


Recently, Carraher et al. \cite{conjecture}, using a local discharging method, determined the values of $\mu(\{1,j,k\})$ for many $j$ and $k$.  Among their results, the following conjectures were posed:

\begin{guess} {\rm (Conjecture 26.  \cite{conjecture})}
\label{conj-odd}
Let $D =\{1, j, k\}$ where $j$ is odd, $j \geq 3$, and $k$ is even.  Then
$
\mu(D) = \frac{k}{2(k+j)},
$
provided that $k \geq 3j$.
\end{guess}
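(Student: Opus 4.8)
The plan is to prove the two inequalities $\mu(D)\ge\frac{k}{2(k+j)}$ and $\mu(D)\le\frac{k}{2(k+j)}$ separately, noting first that by \cref{H-odd} the equality is already known when $k\ge j(j-1)/2$; since $3j\ge j(j-1)/2$ precisely for $j\le 7$, the only genuinely new cases are $j\ge 9$ with $3j\le k<j(j-1)/2$, and I would fix such a pair $(j,k)$ throughout. For the lower bound I would argue unconditionally: by \eqref{eq:1} it suffices to produce a single ``good time'' $t$ with $||tD||\ge\frac{k}{2(k+j)}$, since \cref{alg:kappa} then builds a $D$-sequence of that density. Taking $t=m/(k+j)$ with $m$ near $(k+j)/2$ makes $t(k+j)$ an integer, so $||tk||=||tj||$, and it remains only to choose $m$ so that $||m/(k+j)||$ and $||mj/(k+j)||$ are both at least $\frac{k}{2(k+j)}$ --- a short finite check that small cases suggest always succeeds (indeed $\kappa(D)=\frac{k}{2(k+j)}$ appears to hold throughout this range). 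So the lower bound is not where the difficulty lies.

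The content is the upper bound $\mu(D)\le\frac{k}{2(k+j)}$. I would take a $D$-sequence $S$ realizing the maximum density --- periodic of period at most $2^{\max(D)}$ by Cantor and Gordon \cite{CG}, or simply working with the upper density on a long window --- and bound $|S\cap I|$ for an interval $I$ of length $2(k+j)$: the forbidden difference $1$ already forces the elements of $S$ apart, and deleting the further elements ruled out by the forbidden differences $j$ and $k$ should leave at most $k$ of them in $I$. I would organise this with a local discharging scheme in the spirit of Carraher et al.\ \cite{conjecture} --- assign each element of $S$ a weight supported on a short interval around it and show that the total weight on any window of length $2(k+j)$ is at most $k$ --- where the role of the hypothesis $k\ge 3j$ is presumably to keep the weights of consecutive ``$j$-blocks'' from accumulating.

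The step I expect to be the genuine obstacle --- and the point at which I would test the statement rather than grind through the bookkeeping --- is pushing the threshold down to $3j$ in place of Haralambis' $j(j-1)/2$. For $j\ge 9$ one has $3j<j(j-1)/2$, so a proof must control windows in which several $j$-translates interact, and it is far from clear that a naive discharging argument survives there; moreover, if the boundary condition in \cref{H-odd} is sharp then \cref{conj-odd} must fail once $j$ is large, since sharpness would furnish a pair $(j,k)$ with $j$ odd, $k$ even, $3j\le k<j(j-1)/2$, and a $D$-sequence of density strictly exceeding $\frac{k}{2(k+j)}$. Since the lower bound is never in doubt, I would therefore look directly for such a denser sequence, beginning with $j=9$ and $k\in\{28,30,32,34\}$: if one exists, \cref{conj-odd} is false as stated and the correct hypothesis is essentially $k\ge j(j-1)/2$; if no such sequence exists for any such pair, the discharging above can in principle be tuned to close the gap. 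From the shape of Haralambis' hypothesis I expect the former.
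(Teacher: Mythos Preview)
Your instinct in the final paragraph is correct: \cref{conj-odd} is false, and the paper does not prove it but disproves it via \cref{4n+1,4n+3}. Your suggested test cases $j=9$, $k\in\{30,32,34\}$ are all covered by \cref{4n+1}: for $D=\{1,9,30\}$ (so $n=2$, $m=3$, $\overline{k}=0$, Case~2) one gets $\mu(D)=\kappa(D)=\tfrac{12}{31}$, strictly larger than the conjectured $\tfrac{30}{78}=\tfrac{5}{13}$; likewise $k=32$ gives $\tfrac{13}{33}>\tfrac{16}{41}$ and $k=34$ gives $\tfrac{2}{5}>\tfrac{17}{43}$.

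The one substantive error in your write-up is the parenthetical ``indeed $\kappa(D)=\frac{k}{2(k+j)}$ appears to hold throughout this range.'' That is exactly what fails: in every counterexample $\kappa(D)$ is \emph{strictly larger} than $\frac{k}{2(k+j)}$, so $\mu(D)\ge\kappa(D)$ already refutes the conjecture without any upper-bound argument or sequence search. The paper's contribution is not locating a dense sequence by brute force but proving closed-form values $\kappa(D)=\mu(D)$ for infinite families. The lower bound comes from explicit witnesses $(t,m)$ with $\|tD\|_m\ge \alpha m$ (e.g.\ $t=n$, $m=2n+1$ when $\overline{k}\in\{2n,2n+2\}$); the matching upper bound uses a block-partition lemma (\cref{grid}) tracking parities within consecutive intervals of length $j+1$, fed into Haralambis' \cref{H}. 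So your contemplated discharging is replaced by this parity-block bookkeeping, and the outcome is that Haralambis' threshold $k\ge j(j-1)/2$ in \cref{H-odd} is sharp, exactly as you anticipated.
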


\begin{guess} {\rm (Conjecture 29. \cite{conjecture})}
\label{conj-even}
Let $D = \{1, j, k\}$, where both $j$ and $k$ are even, and $k > j$.  Then
$
\mu(D) = \frac{j}{k+j}.
$
\end{guess}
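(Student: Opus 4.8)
\medskip

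\noindent\textbf{Proof proposal.}
The natural plan is to settle \cref{conj-even} by proving $\mu(D)\ge \frac{j}{k+j}$ and $\mu(D)\le \frac{j}{k+j}$ separately, but before committing to the (presumably hard) upper bound I would first test the claimed value against \cref{H-even}. The lower bound is the routine direction: reducing modulo $k+j$ one has $k\equiv -j$, so a $(k+j)$-periodic sequence with residue set $A\subseteq\Z_{k+j}$ is a $D$-sequence precisely when $|A|=j$ and $(A-A)\cap\{\pm 1,\pm j\}=\emptyset$ in $\Z_{k+j}$; since $j$ is even and $k+j>2j$ there is ample room, and such an independent set of the circulant graph $C_{k+j}(1,j)$ is straightforward to produce (for instance $A=\{0,2,4,7,9,11\}\subseteq\Z_{14}$ when $D=\{1,6,8\}$). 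Hence $\mu(D)\ge \frac{j}{k+j}$; alternatively one may verify $\kappa(D)\ge\frac{j}{k+j}$ from a suitable $t$ and invoke \eqref{eq:1}.

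The difficulty is that the matching upper bound $\mu(D)\le \frac{j}{k+j}$ is \emph{false} in general. Write $k=n(j+1)+\overline{k}$ with $0\le\overline{k}\le j$; since $j$ and $k$ are even, $\overline{k}=0$ forces $n$ even, and whenever the hypothesis of \cref{H-even} is met it gives $\mu(D)=\frac{nj/2}{n(j+1)+1}$, which strictly exceeds $\frac{j}{k+j}=\frac{j}{n(j+1)+j}$ for every $n\ge 2$. The smallest explicit counterexamples are $D=\{1,2,6\}$, for which $\mu(D)=\tfrac{2}{7}>\tfrac{1}{4}$ (already the period-$7$ sequence $\{0,3\}+7\Z$ has density $\tfrac{2}{7}$), and $D=\{1,4,10\}$, for which $\mu(D)=\tfrac{4}{11}>\tfrac{4}{14}$; in fact $\frac{j}{k+j}$ is the correct value only for a sparse family of pairs, for example $k=j+2$. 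So the real task is to \emph{disprove} \cref{conj-even}, and the crucial ingredient is the sharpness of the boundary hypotheses in \cref{H-even}.

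Concretely I would: (i) pin down $\mu(D)$ for all even $j,k$ with $k>j$ by extending the Cantor--Gordon/Haralambis framework, with the lower bound coming from the circulant construction above (and its periodic refinements) and the upper bound from an extremal periodic-sequence / discharging analysis of the gap pattern forced by $1,j,k\in D$ (chosen integers are nonconsecutive, and $j$ and $k$ impose two further forbidden distances); (ii) determine when the resulting value agrees with $\frac{j}{k+j}$; and (iii) record the smallest failures of \cref{conj-even} and, similarly, revisit \cref{conj-odd}. The main obstacle is the upper bound in step (i): bounding a hypothetical $D$-sequence of density exceeding $\mu(D)$ requires a careful amortized count over windows of length a multiple of $k+1$, balancing the two forbidden jumps $j$ and $k$, and the delicate regime is exactly when $\overline{k}$ is small, so that the constraints $j$ and $k$ act almost independently --- which is precisely the regime in which the conjecture breaks down.
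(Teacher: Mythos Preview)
Your proposal is correct in the essential point and aligns with the paper's treatment: the statement is a conjecture that is \emph{false}, and both you and the paper observe that \cref{H-even} already furnishes counterexamples (the paper states this explicitly in the introduction). Your concrete instances $D=\{1,2,6\}$ and $D=\{1,4,10\}$, obtained from the $\overline{k}=0$ case of \cref{H-even}, are valid and your comparison $\frac{nj/2}{n(j+1)+1}>\frac{j}{n(j+1)+j}$ for $n\ge 2$ is correct.

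Where the routes diverge is in the follow-up. You sketch an ambitious program to determine $\mu(D)$ for \emph{all} even $j,k$ via a discharging/amortized-count argument on windows of length a multiple of $k+1$; the paper does not attempt this. Instead it proves two targeted results: \cref{tj-even} (the case $k=mj$ with $m$ even, via \cref{kappa-lemma}) and \cref{new} (the case $k=m(j+1)+3$, via a direct block analysis and \cref{H}). These give further counterexamples to \cref{conj-even} for odd $m\ge 3$ and, more to the point, show that the boundary hypotheses in \cref{H-even} are sharp. Your plan would, if carried out, subsume these, but the paper's approach has the advantage of yielding exact values $\mu(D)=\kappa(D)$ for explicit infinite families with short, self-contained proofs, whereas your step~(i) upper bound is left as a heuristic. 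One minor remark: your lower-bound discussion is fine for the example, but you do not give a general construction of the size-$j$ independent set in $C_{k+j}(1,j)$; the paper simply cites \cite{conjecture} for this direction.
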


It was proved in \cite{conjecture} that \cref{conj-odd} is true when $j=3,5,7$. Furthermore, \cref{H-odd} implies that it also holds for $k \geq j(j-1)/2$.  

In Section 3 of this article, we show that \cref{conj-odd} is not always true by establishing the following two theorems: 

\begin{theorem}
\label{4n+1}
Let $D = \{1,j,k\}$ with $j=4n+1$, $k$ even, and $k \leq j(j-1)/2$. 
If $k=m(j+1) + \overline{k}$ with $\overline{k}$ as described below, then 
\[
\kappa(D) = \mu(D) = 
\begin{cases}
\frac{n}{2n+1}      & \textit{if }\overline{k} = 2n, 2n+2\\
\frac{k-2m}{2(k+1)} & \textit{if } \max\{0, 2(n-m)\} \leq \overline{k} \leq 2(n-1).
\end{cases}
\]
\end{theorem}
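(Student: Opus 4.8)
The plan is to combine the Cantor--Gordon inequality $\mu(D)\geq\kappa(D)$ of \eqref{eq:1} with two matching one-sided bounds: a ``good time'' witnessing $\kappa(D)\geq V$, where $V$ denotes the asserted value, together with the density bound $\mu(D)\leq V$. Chaining these gives $V\leq\kappa(D)\leq\mu(D)\leq V$, hence equality throughout. One may assume $m\geq 1$: if $m=0$ then $k=\overline{k}\leq j$ in the first case, contradicting $j<k$, while in the second case $\max\{0,2(n-m)\}=2n>2(n-1)$ makes the hypothesis empty. Also, since $k$ is even so is $\overline{k}=k-m(4n+2)$, so in the second case $\overline{k}=2b$ with $\max\{0,n-m\}\leq b\leq n-1$.

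\emph{Lower bound on $\kappa(D)$.} For $\overline{k}\in\{2n,2n+2\}$ I would take $t=\tfrac{n}{2n+1}$. Since $j=4n+1\equiv-1$ and $k=2m(2n+1)+\overline{k}\equiv\overline{k}\equiv\pm1\pmod{2n+1}$, one computes $nj\equiv n+1$ and $nk\equiv\pm n\pmod{2n+1}$, so $\|t\|=\|tj\|=\|tk\|=\tfrac{n}{2n+1}$ and $\kappa(D)\geq\tfrac{n}{2n+1}$. For the second range take $t=\tfrac{c}{k+1}$ with $c:=\tfrac{k-2m}{2}=2mn+b$; as $1\leq c<\tfrac{k+1}{2}$ and $ck\equiv-c\pmod{k+1}$, automatically $\|t\|=\|tk\|=\tfrac{c}{k+1}$, and it only remains to check that $cj\bmod(k+1)$ lies in $[c,\,k+1-c]$. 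Using $k+1=2(c+m)+1$, so $2c\equiv-(2m+1)$, one gets $cj=4nc+c\equiv b-2n(m+1)\equiv 2n(m-1)+3b+2m+1\pmod{k+1}$, and this quantity lies in $[c,\,k+1-c]$ precisely when $\max\{0,n-m\}\leq b\leq n-1$, i.e.\ under the stated hypothesis; hence $\kappa(D)\geq\tfrac{k-2m}{2(k+1)}$.

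\emph{Upper bound on $\mu(D)$.} This is the core of the theorem, and I would establish it by extending Haralambis's structural analysis. By the result of Cantor and Gordon (refined by Carraher et al.) that $\mu(D)$ is attained by a periodic $D$-sequence, it suffices to bound $\delta(S)$ for such an $S$; encoding $S$ by its cyclic gap sequence $g_1,g_2,\dots$, one has $g_i\geq 2$ for all $i$ (as $1\in D$), no cyclic block $g_i+\dots+g_{i'}$ equal to $j$ or $k$ (as $j,k\in D$), and $\delta(S)=N/\sum_i g_i$ over one period, so the task is to prove $\sum_i(g_i-2)\geq(V^{-1}-2)N$. I would do this by a discharging argument: distribute the total excess among the elements of $S$ so that each receives amortized weight at least $V^{-1}-2$, using the forbidden-block conditions to rule out overlong runs of $2$'s and to constrain the placement of the larger gaps; the hypothesis $k\leq j(j-1)/2$ is exactly what forces the $k$-constraint to interact with these short patterns, unlike in Haralambis's complementary range of \cref{H-odd}. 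The analysis splits along $\overline{k}\in\{2n,2n+2\}$ versus the second range, and further into subcases comparing $m$ with $n$.

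The main obstacle is this upper bound. Two things make it delicate: the discharging must be calibrated to hit $V$ \emph{exactly}, and the two candidate values $\tfrac{n}{2n+1}$ and $\tfrac{k-2m}{2(k+1)}$ arise from structurally different extremal gap patterns that have to be matched separately; and one must use the $j$- and $k$-constraints simultaneously, since neither alone is enough — the one-missing-difference values $\mu(\{1,j\})=\tfrac12$ and $\mu(\{1,k\})=\tfrac{k}{2(k+1)}$ are both strictly larger than $V$. Finally, verifying that $\max\{0,2(n-m)\}\leq\overline{k}\leq 2(n-1)$ is exactly the window on which the construction and the structural bound agree is what pins down the hypotheses; and since $\tfrac{n}{2n+1}>\tfrac{k}{2(k+j)}$ precisely when $k<j(j-1)/2$, the case $\overline{k}\in\{2n,2n+2\}$ produces genuine counterexamples to \cref{conj-odd} as soon as $n\geq 2$, when the window $3j\leq k<j(j-1)/2$ is nonempty.
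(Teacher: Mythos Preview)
Your lower bound is correct and matches the paper exactly: the same moduli $2n+1$ and $k+1$ with the same times $n$ and $c=k/2-m$, and your computation that $cj\bmod(k+1)$ lands in $[c,k+1-c]$ precisely when $\max\{0,n-m\}\le b\le n$ is the content of the paper's displayed inequality for $(4n+1)(k/2-m)$.

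The upper bound, however, is a genuine gap. You only announce a discharging scheme on cyclic gap sequences without giving any discharging rules or carrying out the analysis; and there is concrete reason to doubt that a routine discharging argument works here, since Carraher et al.\ \cite{conjecture} already applied local discharging to this very family and could not settle these cases --- that is precisely why they posed \cref{conj-odd}, which \cref{4n+1} refutes. The paper's method is quite different and does not use discharging at all. It invokes Haralambis' \cref{H} to reduce to bounding a single prefix $S[n]$, and the real work is the block lemma (\cref{grid}): partitioning $[0,j+k-1]$ into blocks $B_i$ of length $j+1$, one shows that if any block $B_q$ with $(q+1)(j+1)\le j+k-1$ is \emph{full} (has $(j+1)/2$ elements), then parity propagation from $j\in D$, run both forward and backward, forces $S[j+k-1]\le k/2$. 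The theorem then follows by a short dichotomy. In Case~1 ($\overline{k}\in\{2n,2n+2\}$): either $S[j]\le 2n$ and \cref{H} applies at $n=j$, or $B_0$ is full and \cref{grid} gives $S[j+k-1]\le k/2$, which one checks is at most the required numerator. In Case~2: either some $B_i$ with $i\le m-1$ is full and \cref{grid} gives $S[j+k-1]\le k/2$, whence $S[j+k-1]/(j+k)\le(k-2m)/(2(k+1))$; or every such block has at most $(j-1)/2$ elements, and summing directly yields $S[k]\le m(j-1)/2+\overline{k}/2=(k-2m)/2$. The hypothesis $k\le j(j-1)/2$ is used only to guarantee $m\le 2n-1$, so that the relevant full block sits within the range where \cref{grid} applies.
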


\begin{theorem}
\label{4n+3}
Let $D = \{1,j,k\}$ with $j=4n+3$, $k$ even, and $k \leq j(j-1)/2$.
If $k=m(j+1) + \overline{k}$ with $\overline{k}$ as described below, then
\[
\kappa(D) = \mu(D) = 
\begin{cases}
\frac{n}{2n+1}      & \textit{if }\overline{k} = 2(n-m) \  \textit{and} \ m \leq 2n\\
\frac{k-2m}{2(k+1)} & \textit{if } \max\{0, 2(n-m+1)\} \leq \overline{k} \leq 2n \\
\frac{2n+1}{4n+4}   & \textit{if }\overline{k} = 2(n+1).
\end{cases}
\]
\end{theorem}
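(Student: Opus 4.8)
The plan is to show, in each of the three ranges of $\overline{k}$, that the displayed fraction is simultaneously a lower bound for $\kappa(D)$ and an upper bound for $\mu(D)$. Since $\kappa(D)\le\mu(D)$ holds for every finite $D$ by \eqref{eq:1}, the chain $(\text{target})\le\kappa(D)\le\mu(D)\le(\text{target})$ then gives both equalities at once. The argument runs parallel to that of \cref{4n+1}; changing $j$ from $4n+1$ to $4n+3$ only alters the residue of $j$ modulo $4n+4$ (from $1$ to $-1$) while leaving it $\equiv 1\pmod{2n+1}$, so essentially the same machinery applies with the residue bookkeeping adjusted.

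For the lower bounds we name an explicit good time in each case and verify $\|tD\|\ge(\text{target})$. If $\overline{k}=2(n-m)$ then $j\equiv 1$ and, from $k=m(4n+4)+2(n-m)$, also $k\equiv -1\pmod{2n+1}$, so $t=n/(2n+1)$ gives $\|tD\|=n/(2n+1)$; equivalently, the period-$(2n+1)$ sequence obtained from a maximum independent set of the cycle $C_{2n+1}$ (the extremal $\{1,2n\}$-sequence of Cantor--Gordon) already avoids the separations $1,j,k$, so $\mu(D)\ge n/(2n+1)$. If $\overline{k}=2(n+1)$ we work modulo $4n+4$, where $j\equiv -1$ and $k\equiv 2n+2$: then $t=(2n+1)/(4n+4)$ yields $\|t\cdot 1\|=\|tj\|=(2n+1)/(4n+4)$ and $\|tk\|=\tfrac12$, so $\kappa(D)\ge\tfrac{2n+1}{4n+4}$. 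In the middle range we take $t=(k-2m)/(2(k+1))=(k/2-m)/(k+1)$ (legitimate since $k$ is even); then $k\equiv -1\pmod{k+1}$ gives $\|t\cdot 1\|=\|tk\|=(k-2m)/(2(k+1))$, and a short computation reducing $j(k/2-m)$ modulo $k+1$ shows $\|tj\|$ is no smaller, the hypotheses $\overline{k}\ge\max\{0,2(n-m+1)\}$ and $\overline{k}\le 2n$ being exactly what place the reduced value in the required interval (they also force $m\ge 1$).

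For the upper bounds we must show every $\{1,j,k\}$-sequence $S$ has density at most the target; by the Cantor--Gordon periodicity theorem it suffices to bound the density of periodic such $S$. Here we use a discharging/amortization argument over a period: since $1\in D$, $S$ has no two consecutive elements, so a block of length $j+1$ (resp.\ $k+1$, according to the case) carries at most a nominal quota, and one shows that any block exceeding its quota is forced, via the separation $j$ (resp.\ $k$) it otherwise would violate, to transfer a matching deficit to a neighboring block, so that the quota holds on average. The inequality $k\le j(j-1)/2$ together with the stated range of $\overline{k}$ is precisely what makes this bookkeeping balance with no surplus, which both yields the bound and shows it cannot be improved --- so Haralambis's threshold $k\ge j(j-1)/2$ in \cref{H-odd} is sharp and \cref{conj-odd} fails here.

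The main obstacle is this upper bound, particularly the boundary sub-cases $\overline{k}=2(n-m)$ and $\overline{k}=2(n+1)$, where the amortization has no slack and one must simultaneously track the positions forbidden by the $j$-constraint and by the $k$-constraint and rule out every local configuration that would beat the bound --- a case analysis markedly more delicate than in the range $k\ge j(j-1)/2$ handled by \cref{H-odd}. It is genuinely subtle because throughout much of this regime the value $k/(2(k+j))$ predicted by \cref{conj-odd} strictly exceeds the true density, so the entire content of the theorem is that no $\{1,j,k\}$-sequence can attain it.
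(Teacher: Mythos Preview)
Your lower bounds are correct and coincide with the paper's: the same good times $t=n$ modulo $2n+1$, $t=2n+1$ modulo $4n+4$, and $t=k/2-m$ modulo $k+1$ are used, and your residue verifications are accurate.

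The gap is in the upper bounds, which you only sketch. Two concrete points:

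\textbf{Case $\overline{k}=2(n-m)$.} You flag this as one of the delicate boundary sub-cases requiring a subtle amortization, but in fact the paper handles it with a one-line argument that bypasses density counting entirely: since $k=mj+(2n-m)$ with $m\le 2n$, the vertices $0,j,2j,\ldots,mj,mj+1,\ldots,mj+(2n-m)=k$ form an odd cycle $C_{2n+1}$ inside the distance graph $G(\Z,D)$, so $\chi_f(G(\Z,D))\ge\chi_f(C_{2n+1})=(2n+1)/n$ and hence $\mu(D)=1/\chi_f(D)\le n/(2n+1)$ by \eqref{eq:2}. Your discharging outline does not produce this, and it is not obvious how an averaging argument over blocks would yield the exact bound $n/(2n+1)$ here.

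\textbf{Cases $\overline{k}=2(n+1)$ and the middle range.} The paper does not amortize over a period. It uses Haralambis' \cref{H}: it suffices to find, for every $D$-sequence $S$ with $0\in S$, a single $N$ with $S[N]/(N+1)\le\alpha$. The workhorse is \cref{grid}, which says that if any length-$(j+1)$ block $[q(j+1),q(j+1)+j]$ inside $[0,j+k-1]$ is ``full'' (contains $(j+1)/2$ elements of $S$), then already $S[j+k-1]\le k/2$. With this in hand the two cases are short: either some early block is full and \cref{grid} gives the bound at $N=j+k-1$, or every block of $[0,m(j+1)-1]$ has at most $(j-1)/2$ elements and one reads off $S[k]\le (k-2m)/2$ directly. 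Your proposal neither invokes \cref{H} nor isolates anything playing the role of \cref{grid}; the ``transfer a matching deficit to a neighboring block'' idea is in the right spirit but is not a proof, and the actual argument in \cref{grid} (a two-step forward/backward partition tracking parities) is where the real work lies.
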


These results determine the values of $\mu(D)$ for some families of the set $D$ in \cref{conj-odd} with $3j \leq k \leq j(j-1)/2$. Our results reveal that the boundary condition $k \geq j(j-1)/2$ required in \cref{H-odd} is sharp (i.e., the best possible).

For \cref{conj-even}, one direction  of the equality  
was established in \cite{conjecture}, while \cref{H-even} implies that the other direction is not always true. 
In Section 4 of this article, we prove the following theorem:

\begin{theorem}
\label{new}
Let $D=\{1, j, k\}$, where $j$ is even, $k=m(j+1)+3$, and $1 \leq m \leq j-3$.  
Then
\[
\mu(D) = \kappa(D) =\frac{j(m+1)}{2(j+k)}. 
\]
\end{theorem}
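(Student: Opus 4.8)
Throughout set $N := j+k$, $h := \tfrac12\,j(m+1)$ (a non‑negative integer, since $j$ is even) and $c := \tfrac{j(m+1)}{2(j+k)} = h/N$; note that $N = (m+1)(j+1)+2$. By \eqref{eq:1} it suffices to prove the two bounds $\kappa(D)\ge c$ and $\mu(D)\le c$, for then $c\le\kappa(D)\le\mu(D)\le c$.

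For $\kappa(D)\ge c$ the plan is to exhibit the good time $t := (h+1)/N$ and to check $\|td\|\ge c$ for each $d\in\{1,j,k\}$. Since $2(h+1) = N-m-1 < N$ we get $\|t\| = (h+1)/N > c$. As $j$ is even, $(j/2)N\equiv 0\pmod N$, and from $2h = j(m+1) = N-m-3$ one computes $jh\equiv -\tfrac12 j(m+3)\pmod N$, hence $j(h+1)\equiv -h\pmod N$; since $0<h<N/2$ this gives $\|jt\| = h/N = c$. Finally $k\equiv -j\pmod N$, so $\|kt\| = \|jt\| = c$. Thus $\|tD\| = c$, giving $\kappa(D)\ge c$ and, via \eqref{eq:1}, $\mu(D)\ge c$. (Equivalently one can write down an extremal periodic $D$‑sequence of density $c$: with period $N$, put in the $i$‑th residue block $\{i(j+1),\dots,i(j+1)+j\}$ the residues $i(j+1)+2,\,i(j+1)+4,\dots,\,i(j+1)+j$ and leave the last two residues of the period empty; a routine check shows it avoids the differences $1$, $j$ and $k$.)

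The substance is $\mu(D)\le c$, for which the plan is to prove the window bound: any $N$ consecutive integers contain at most $h$ terms of a $D$‑sequence $S$. Given this, tiling $[0,n]$ by such windows yields $\overline\delta(S)\le h/N = c$, hence $\mu(D)\le c$. So fix a window $W$ of length $N = (m+1)(j+1)+2$ and split it into $m+1$ consecutive blocks $B_0,\dots,B_m$ of length $j+1$ followed by a block $T$ of length $2$. Since $1,j\in D$ and $k>j$, the set $S\cap B_i$ is independent in the graph obtained from the path on the $j+1$ integers of $B_i$ by adding the edge between its two endpoints (which differ by $j$) — that is, in the odd cycle $C_{j+1}$ — so $|S\cap B_i|\le\lfloor(j+1)/2\rfloor = j/2$, and $|S\cap T|\le 1$ because $1\in D$. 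Hence $|S\cap W|\le (m+1)\,j/2+1 = h+1$, and everything reduces to excluding the equality case, in which every $B_i$ carries a maximum independent set of $C_{j+1}$ and $T$ carries one term.

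This exclusion is where the argument lives, and it is the step I expect to be the main obstacle. The idea is to exploit rigidity: a maximum independent set of $C_{j+1}$ is alternating except for a single "double gap," so a full block $B_i$ is determined by the location of that gap; the differences $1$ and $j$, acting between $B_i$ and $B_{i+1}$, propagate the double gap from one block to the next in a controlled way; and the difference $k = m(j+1)+3$ links each position of $B_0$ with the position three further along in $B_m$ (and positions $j-2,j-1$ of $B_0$ with $T$). Tracing the double gaps of $B_0,\dots,B_m$ through the consecutive‑block constraints and then imposing the $k$‑constraint between $B_0$ and $B_m$ together with the $T$‑constraint should force some $B_i$ to lose a term, contradicting fullness — and this collapse occurs precisely when $1\le m\le j-3$ (that the hypothesis is sharp is already clear from \cref{H-even}: for $m>j-3$ the true density strictly exceeds $h/N$, so no $N$‑window bound can hold). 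Carrying this out amounts to a parametric case analysis — the two possible positions of the single term of $T$, and within each the admissible double‑gap patterns — that must be organized uniformly in $j$ and $m$, with the "$+2$" in $N = (m+1)(j+1)+2$ and the "$3$" in $k = m(j+1)+3$ matching up exactly for the chain of constraints to close; this is also what ties the proof to $k\equiv 3\pmod{j+1}$.
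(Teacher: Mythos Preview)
Your lower bound is correct and is exactly the paper's computation: with integer $t=h+1$ and modulus $N=j+k$ one checks $\|tD\|_N=h$.

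The upper bound, however, has a genuine gap: you correctly reduce to excluding the equality case $|S\cap W|=h+1$, but then defer that exclusion as an unexecuted ``parametric case analysis'' --- and that exclusion \emph{is} the proof. The paper carries it out, but via \cref{H} rather than a universal window bound, and the anchor $0\in S$ supplied by \cref{H} is precisely what keeps the argument short. With $0\in S$ one has $k\notin S$; looking first at $[0,k]$ (so $m$ blocks $A_0,\dots,A_{m-1}$ of length $j+1$ plus a tail $B$ of length $4$), the extremal case $S[k]=mj/2+2$ forces $B=\{k-3,k-1\}$ exactly. From this pinned tail one propagates backward: each full $A_i$ must have the shape $\{0,2,\dots,e-2,e+1,e+3,\dots,j-1\}+i(j+1)$ with double-gap positions $4\le e_1\le\cdots\le e_m$ (where $e_i$ belongs to $A_{m-i}$). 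Extending now to $[0,N-1]$: if $e_m=j$ then $j-2\in A_0$ and $k-1\in S$ kill both elements of the final pair $\{N-2,N-1\}$; otherwise $e_1\le j-2$, and the two elements $(m-1)(j+1)+e_1+1,\ (m-1)(j+1)+e_1+3\in S$ (via the difference $j$) together with $e_1-2\in A_0$ (via the difference $k$) remove three \emph{consecutive} positions from the block $A_m$, forcing $|A_m|\le j/2-1$. Either way $S[N-1]\le h$. In your setup the anchor is absent, so neither $T$ nor the double gap of $B_m$ is pinned at the start; the backward propagation then branches, and the case analysis you left undone is strictly larger than the one the paper actually needs.
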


This result shows that the boundary conditions in \cref{H-even} are also sharp and provides more counterexamples to  \cref{conj-even}. 

For general 3-element $D$-sets, Gupta \cite{Gupta} extended \cref{H-odd} to a similar formula for $D=\{i, j, k\}$ when $j$ is odd. 
The author also proved a lower bound of $\mu(D)$ when $j$ is even for most values of $k$, and showed that the bound is sharp for some cases. When $i=1$, these results recover exactly \cref{H-even,H-odd}, thus leaving the same $D$-sets undetermined. 

After proving \cref{4n+1,4n+3,new} in Sections 3 and 4,  in Section 5 we discuss computational aspects of the problem. We also give optimal periodic $D$-sequences for most of the $D$-sets proved in this article. 


\section{Distance graphs and Haralambis' Lemma}


The parameters $\kappa(D)$ and $\mu(D)$ are closely related to coloring parameters of distance graphs (cf. \cite{surveyL}).  
Let $D$ be a set of positive integers.  The {\it distance graph generated by $D$}, denoted as $G(\Z, D)$, has the integers $\Z$ as the vertex set. Two vertices are adjacent whenever the absolute value of their difference falls in $D$. Introduced by Eggleton, Erd\H{o}s, and Skilton \cite{realline} in 1985, distance graphs have been studied intensively (cf. \cite{chz,clz,realline, EES86, ees90, KK, KM, KM2, LL,LZ,llz,LL2,L,surveyL,Aileen,lz97,WL,pattern,d=3,zhu03, voigt, vw, vw2}).

The {\it fractional chromatic number} of a graph $G$, denoted by $\chi_f(G)$, is the minimum ratio $m/n$ ($m, n \in Z^+$) of an $(m/n)$-coloring, where an $(m/n)$-coloring is a function on $V(G)$ to $n$-element subsets of $[m]=\{1,2, \cdots, m\}$ such that if $uv \in E(G)$ then $f(u) \cap f(v) = \emptyset$. Chang et al. \cite{clz}  proved that, for any set of positive integers $D$, 
$\chi_f(D) = 1/\mu(D)$. 
Combining this with (1), we have
\begin{equation}
\frac{1}{\mu(D)} = \chi_f(D) \leq \frac{1}{\kappa(D)}. \label{eq:2}
\end{equation}

Viewing the problem of determining $\mu(D)$ through the lens of fractional chromatic number can sometimes 
help to solve open problems  or yield simple proofs (cf. \cite{chz, clz, L, Aileen, lz97, LZ}). 
It is also clear from this point of view that $\mu(D)= \mu(nD)$, where $nD = \{nd \colon d \in D\}$, by considering the fact that $G(\Z, nD)$ 
is just $n$  disjoint, isomorphic copies of $G(\Z, D)$.

The following result, due to Haralambis \cite{H}, is one of the few tools for bounding $\mu(D)$ from above.

\begin{lemma}{\rm \cite{H}}
\label{H}
Let $D$ be a set of positive integers, and let $\alpha \in (0,1]$. If for every $D$-sequence $S$ with $0 \in S$ there exists a positive integer $n$ such that $S[n]/(n+1) \leqslant \alpha$, then $\mu(D) \leqslant \alpha$.
\end{lemma}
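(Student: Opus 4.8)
The plan is to show that $\delta(S)\le\alpha$ for every $D$-sequence $S$ that has a density; since $\mu(D)$ is by definition the supremum of such values $\delta(S)$, this yields $\mu(D)\le\alpha$. First I would reduce to the case $0\in S$: if $S=\emptyset$ there is nothing to prove, and otherwise translating $S$ by $-\min S$ produces a $D$-sequence containing $0$ whose counting function differs from that of $S$ only by a bounded shift of the index, so $\bar{\delta}$, $\underline{\delta}$, and the density (when it exists) are all unchanged. Hence assume $0\in S$ and $S$ has a density.

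The core of the proof is to partition $\{0,1,2,\dots\}$ into consecutive finite blocks on each of which $S$ occupies at most an $\alpha$-fraction of the positions. I would build integers $0=m_0<m_1<m_2<\cdots$ recursively. Given $m_i$, consider the tail $S\cap\{m_i,m_i+1,\dots\}$; if it is empty then $S$ is finite, $\delta(S)=0\le\alpha$, and we are done. Otherwise let $a\ge m_i$ be its least element and apply the hypothesis to the shifted sequence $\{\,s-a:s\in S,\ s\ge a\,\}$, which is again a $D$-sequence and contains $0$: this produces a positive integer $n$ with $|S\cap\{a,a+1,\dots,a+n\}|\le\alpha(n+1)$. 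Set $m_{i+1}=a+n+1$; then $m_{i+1}>m_i$ because $n\ge1$, and since $S$ meets $\{m_i,\dots,a-1\}$ in no point, the block $B_{i+1}:=\{m_i,\dots,m_{i+1}-1\}$ satisfies $|S\cap B_{i+1}|\le\alpha(n+1)\le\alpha(m_{i+1}-m_i)$, the last step using $a\ge m_i$.

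Summing the block bounds telescopes: $S[m_i-1]=\sum_{j=1}^{i}|S\cap B_j|\le\alpha\sum_{j=1}^{i}(m_j-m_{j-1})=\alpha m_i$, so $S[m_i-1]/m_i\le\alpha$ for every $i$. Since $m_i\to\infty$, there are infinitely many indices $N$ with $S[N]/(N+1)\le\alpha$, whence $\underline{\delta}(S)\le\alpha$; as $S$ has a density, $\delta(S)=\underline{\delta}(S)\le\alpha$, and taking the supremum over all admissible $S$ finishes the proof. The only delicate point — and the sole place the argument breaks if handled carelessly — is the possible gap between $m_i$ and the first tail element $a$: one must fold the empty stretch $\{m_i,\dots,a-1\}$ into the block $B_{i+1}$ and invoke $a\ge m_i$ so that the \emph{local} sparsity bound $\alpha(n+1)$ survives as the usable bound $\alpha(m_{i+1}-m_i)$. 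The remaining ingredients — translation invariance of density and extracting a $\liminf$ bound from the subsequence of indices $m_i-1$ — are routine.
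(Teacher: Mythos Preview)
The paper does not actually prove this lemma; it is quoted with attribution to Haralambis~\cite{H} and used as a black box throughout Sections~3 and~4, so there is no in-paper argument to compare against. Your proof is correct: the recursive block construction, with the careful absorption of the empty stretch $[m_i,a-1]$ into $B_{i+1}$ so that the local bound $\alpha(n+1)$ becomes $\alpha(m_{i+1}-m_i)$, is exactly the right way to make the telescoping work, and the passage from $S[m_i-1]/m_i\le\alpha$ along $m_i\to\infty$ to $\underline{\delta}(S)\le\alpha=\delta(S)$ is sound.
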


We shall frequently use the following equivalent definition of $\kappa(D)$ (cf. \cite{H}). For positive integers $m$ and $x$, denote 
$||x||_m = \min \{x \ ( {\rm mod}\  m {\rm)}, m - x \ ( {\rm mod} \ m {\rm)}\}$.  Let $D$ be a set of positive integers and $t$ a positive integer, then let $||tD||_m = \min \{||td||_m \colon d \in D\}$. With this notation,    
\[
\kappa(D) = \max \bigg\{\frac{ \ ||tD||_m \ }{m} \colon \ \gcd(m, t) = 1 \bigg\}. 
\]

Let $n$ be a positive integer. It can be seen from the definition that $\kappa(nD)=\kappa(D)$. This, together with the fact discussed above that $\mu(D)=\mu(nD)$, allows us to assume gcd$(D)=1$, unless mentioned otherwise. In addition, it can be seen that if $D$ is a singleton or contains only odd numbers, then $\mu(D)=\kappa(D)=1/2$ (cf. \cite{CG}).

%
\section{$D = \{1, j, k\}$ and $j$ is odd}
%

We begin with a result that partially confirms \cref{conj-odd}.

\begin{theorem}
  \label{tj-odd}
Let $D = \{1,j,mj\}$ with $j$ odd, $j \geq 3 $.  Then 
$\mu(D) = \kappa(D) = 1/2$ if $m$ is odd; and 
$\mu(D) = \kappa(D)=\frac{m}{2(m+1)}$ 
if $m$ is even.
\end{theorem}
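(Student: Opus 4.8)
The plan is to split on the parity of $m$. If $m$ is odd, then $1$, $j$, and $mj$ are all odd, so $D$ consists only of odd integers and hence $\mu(D)=\kappa(D)=1/2$ by the elementary fact recalled at the end of the previous section. So assume from now on that $m$ is even; I will establish $\frac{m}{2(m+1)}\le\kappa(D)\le\mu(D)\le\frac{m}{2(m+1)}$, the middle inequality being \eqref{eq:1}.

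For the upper bound, note that $\{j,mj\}\subseteq D$, so every $D$-sequence is in particular a $\{j,mj\}$-sequence and thus $\mu(D)\le\mu(\{j,mj\})$. Since $\{j,mj\}=j\cdot\{1,m\}$ and $\mu$ is unchanged under scaling of the $D$-set, $\mu(\{j,mj\})=\mu(\{1,m\})$, and the Cantor--Gordon two-element formula gives $\mu(\{1,m\})=\lfloor (m+1)/2\rfloor/(m+1)=\frac{m}{2(m+1)}$ since $m$ is even. (The subset $\{j,mj\}$ is the one that works; $\{1,j\}$ and $\{1,mj\}$ only give the weaker bounds $1/2$ and $\frac{mj}{2(mj+1)}$.)

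For the matching lower bound it suffices, by \eqref{eq:1}, to exhibit a good time. I would take the modulus $q=j(m+1)$, which is odd because both $j$ and $m+1$ are odd, and the multiplier $t=(q-1)/2$; from $2t\equiv-1\pmod q$ one gets $\gcd(t,q)=1$. A direct congruence computation — in which the evenness of $m$ and the oddness of $j$ are exactly what make the relevant halvings integral — yields $t\equiv(q-1)/2$, $tj\equiv jm/2$, and $tmj\equiv jm/2+j\pmod q$, so that $||tD||_q=jm/2$ (the residue $(q-1)/2=(jm+j-1)/2$ being at least $jm/2$, hence not the minimum). Therefore $\kappa(D)\ge||tD||_q/q=(jm/2)/(j(m+1))=\frac{m}{2(m+1)}$, and combining the bounds forces $\kappa(D)=\mu(D)=\frac{m}{2(m+1)}$.

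The proof is short because it leans on the already-known two-element case plus a single explicit good time; the only step requiring genuine care is the congruence bookkeeping for $t$ (checking the three residues modulo $q$, that $\gcd(t,q)=1$, and that $(q-1)/2\ge jm/2$ so the minimum defining $||tD||_q$ really is $jm/2$). I expect this to be routine rather than a real obstacle, and foresee no hidden difficulty.
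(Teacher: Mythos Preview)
Your proposal is correct and follows essentially the same approach as the paper: the same subset bound $\mu(D)\le\mu(\{j,mj\})=\mu(\{1,m\})=\frac{m}{2(m+1)}$ for the upper bound, and the same modulus $q=j(m+1)=mj+j$ with the same good time $t=(q-1)/2=(mj+j-1)/2$ for the lower bound. The paper simply asserts that this $t$ works, while you spell out the residues $tj\equiv jm/2$ and $t(mj)\equiv jm/2+j\pmod q$ and the check that $(q-1)/2\ge jm/2$; these details are correct.
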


\begin{proof} 
We only need to show the case when $m$ is even.  
Note, $\mu(D) \leq \mu (\{j, mj\}) = \mu  (\{1,m\}) = \kappa (\{1,m\}) = \frac{m}{2(m+1)}$.  
It is enough to show that there exists some $t$ with $||tD||_{mj+j} \geq mj/2$. One can see that $t=(mj+j-1)/2$ fulfills this requirement.
\end{proof}

In the remainder of this section we show that \cref{conj-odd} does not always hold. \cref{4n+1,4n+3} make use of the following lemma. 

For integers $a \leq b$, denote $\{a,a+1, \ldots, b\}$ as $[a,b]$.  

\begin{lemma}
\label{grid}
Let $D=\{1,j,k\}$ with $j < k$, $j$ odd and $k$ even, and let $S$ be a $D$-sequence. If $| [q(j+1),q(j+1)+j] \cap S | = (j+1)/2$ for some $q$ with $(q+1)(j+1) \leq j+k-1$, then $S[j+k-1] \leq k/2$.
\end{lemma}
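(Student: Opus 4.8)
The plan is to first pin down $S\cap W$, where $W:=[q(j+1),\,q(j+1)+j]$, and then use the three distances $1,j,k$ to control $S$ on the rest of $[0,j+k-1]$.

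\emph{Step 1: $S\cap W$ is alternating.} Since $1\in D$ and $|W|=j+1$ is even, every block of $j+1$ consecutive integers meets $S$ in at most $(j+1)/2$ points, so the hypothesis forces $S\cap W$ to be a maximum independent set of the path on $W$. The maximum independent sets of a path on $2t$ vertices are the two alternating $t$-sets together with $t-1$ further $t$-sets, and each of the latter contains \emph{both} endpoints of the path. Since the endpoints of $W$ differ by $j\in D$, they cannot both lie in $S$, so $S\cap W$ is one of the two alternating sets; that is, $S\cap W=\{c,c+2,\dots,c+j-1\}$ with $c\in\{q(j+1),\,q(j+1)+1\}$, so $S\cap W$ consists of exactly the even, or exactly the odd, integers in $W$.

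\emph{Step 2: a matching on the ``undetermined'' positions.} Write $a:=q(j+1)$, so $W=[a,a+j]$ and the hypothesis $(q+1)(j+1)\le j+k-1$ is exactly $a\le k-2$. The $(j+1)/2$ points of $S\cap W$ force the remaining $(j+1)/2$ integers of $W$ out of $S$ (via distance $1$), and --- the first computation --- force exactly $j$ integers of $[0,j+k-1]\setminus W$ out of $S$, namely the members of $(S\cap W)\pm1$, $(S\cap W)\pm j$ and $(S\cap W)\pm k$ that lie in $[0,j+k-1]\setminus W$. Hence exactly $2j+1$ of the $j+k$ integers in $[0,j+k-1]$ are ``determined''; let $U$ denote the other $k-j-1$ of them. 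Then $S\cap U$ is an independent set of $G(\Z,D)[U]$, and --- the second computation --- $G(\Z,D)[U]$ has a perfect matching, each edge being a consecutive pair, a pair at distance $j$, or a pair at distance $k$. As $|U|=k-j-1$ is even, $|S\cap U|\le(k-j-1)/2$, and therefore
\[
S[j+k-1]=|S\cap W|+|S\cap U|\le\frac{j+1}{2}+\frac{k-j-1}{2}=\frac{k}{2}.
\]

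\emph{Where the work lies.} Both bracketed computations in Step 2 depend on how $W=[a,a+j]$ sits inside $[0,j+k-1]$, so the argument splits into a few ranges of $q$: for $q=0$ it is the translates $(S\cap W)+j$ and $(S\cap W)+k$ that reach into $[0,j+k-1]\setminus W$; for $a$ near $k-2$ it is $(S\cap W)-j$ and $(S\cap W)-k$; in between one uses a mixture. The two choices $c=a$ and $c=a+1$ are handled together, differing only by a global shift of $1$ together with a little boundary bookkeeping (a shifted forbidden point may leave $W$, or leave $[0,j+k-1]$). In each range one checks by a short, parity-sensitive interval count that precisely $j$ new non-members appear, and then exhibits the perfect matching on $U$: typically $x\leftrightarrow x+k$ pairs the part of $U$ lying below $W$ with a part lying above $W$, and $x\leftrightarrow x+j$ (together with a few consecutive pairs) disposes of the rest. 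The inequality $a\le k-2$ is exactly what guarantees that enough of $[0,j+k-1]$ lies above $W$ for this matching to exist; organizing this bookkeeping cleanly is the only genuinely delicate part of the proof.
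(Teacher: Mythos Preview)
Your Step~1 is correct and nicely argued: any maximum independent set in a path on $j+1$ vertices that is not one of the two alternating sets must contain both endpoints, and these differ by $j\in D$, so $S\cap W$ is forced to be $\{c,c+2,\dots,c+j-1\}$ with $c\in\{a,a+1\}$.

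Your Step~2 takes a genuinely different route from the paper. The paper never isolates a forbidden set or builds a matching; instead it passes to the \emph{smallest} $q$ with $|B_q|=(j+1)/2$, tracks how the deficits $(j+1)/2-|B_i|$ for $i<q$ control the parity structure of $S$ in those blocks, and then runs a mirror-image argument on a backward block decomposition of $[0,j+k-1]$. Your approach is static: freeze $S\cap W$, count what it forbids, and match the residue. What your approach buys is a cleaner conceptual picture (two claims and you are done); what the paper's approach buys is that each step is a short local estimate, so nothing needs to be verified case-by-case over the position of $W$.

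That said, the proposal defers exactly the substantive work. The claim that \emph{exactly} $j$ integers of $[0,j+k-1]\setminus W$ are forbidden is true and has a uniform proof you do not give: work modulo $j+k$, where distance $k$ coincides with distance $j$, so the forbidden set is $(S\cap W)\pm1\cup(S\cap W)\pm j$; inclusion--exclusion gives $(3j+1)/2$ forbidden residues, of which $(j+1)/2$ lie in $W$, and one checks that the linear and circular forbidden sets agree on $[0,j+k-1]$ (here the hypothesis $a\le k-2$ is used). The perfect-matching claim is more delicate. It holds in every instance I checked, but no single rule such as $x\leftrightarrow x+j$ or $x\leftrightarrow x+k$ works globally: for small $k$ (say $k<2j$) the two ``fringes'' of $U$ on either side of $W$ interlock in $\Z/(j+k)$, and some vertices of $U$ have a unique neighbour in $U$, forcing parts of the matching that must then be shown compatible. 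You acknowledge this (``organizing this bookkeeping cleanly is the only genuinely delicate part''), but as written the proposal is a plan rather than a proof, and executing the plan is comparable in length to the paper's argument.
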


\begin{proof}
Partition the elements of $S$ into blocks of length $j+1$:
\[
B_i = S \cap ([0,j]+i(j+1))
\]
for $i \geq 0$. 
Since $1\in D$, $|B_i| \leq (j+1)/2$ for all $i$. 
Let $q$ be the smallest such that $|B_q|=(j+1)/2$, which implies $|B_t| < (j+1)/2$ if $t< q$, and $\{0,2,\dots,j-1\} + (q(j+1)+p) \subset S$ for some $p \in \{0,1\}$. Note that $B_q$ consists of only even numbers when $p=0$ or only odd numbers when $p=1$.  

Let $B^e_i = \{x \in B_i \colon x \equiv p \pmod{2}\}$ and $e_i = |B^e_i|$, and let $B^o_i = \{x \in B_i \colon x \not\equiv p \pmod{2}\}$ and $o_i= |B^o_i|$. Because $q$ is the smallest such that $|B_q|=(j+1)/2$, we know $e_q = (j+1)/2$ and $e_{q-1} < (j+1)/2$. 

\noindent
\textbf{Case 1:} Let $p=0$. We proceed on the following two steps. 

{\bf Step I.} If $q=0$, then let $\lambda = e_0=(j+1)/2$ and move to Step II. Otherwise, since $j$ is odd 
and $e_q = (j+1)/2$, it must be that
$o_q=o_{q-1} = 0$.    
Let $|B_{q-1}| = e_{q-1} = (j+1)/2 - l_{q-1}$ for some $1 \leq l_{q-1} \leq (j+1)/2$. 
Now $o_{q-2} \leq l_{q-1}$, since each number $x \in B^o_{q-2}$ corresponds to $x+j \not\in B^e_{q-1}$. 
Let $|B_{q-2}| = (j+1)/2 - l_{q-2}$, which implies $e_{q-2} \geq (j+1)/2 - l_{q-2}- l_{q-1}$.

By repeating similar arguments we can assume that $|B_{i}| = (j+1)/2 - l_i$ for $0 \leq i \leq q-1$ and that each $e_{i} \geq (j+1)/2- \sum_{n=i}^{q-1} l_n$.
Letting $\sum_{n=0}^{q-1} l_n = l$ we have $e_o \geq (j+1)/2 - l$ and 
\begin{align*}
S[j+k-1] &\leq \sum_{i=0}^{q}|B_i| + \frac{j+k+1 - (q+1)(j+1)}{2}\\
         &= \frac{(q+1)(j+1)}{2} - l + \frac{j+k+1 - (q+1)(j+1)}{2}\\
         &= \frac{k}{2} + \frac{j+1}{2} - l. 
\end{align*}

\noindent
Hence, the result follows if $l \geq (j+1)/2$. If not, let $\lambda = (j+1)/2 - l$.  We can assume that $e_0 \geq \lambda \geq 1$. To prove our lemma, it suffices to show that 
\[
|S \cap [(q+1)(j+1), j+k-1]| \leq \frac{j+k+1 - (q+1)(j+1)}{2} - \lambda.
\]

{\bf Step II.} Let $k = m(j+1) + \overline{k}$ for some $0\leq \overline{k} \leq j$. Partition $S \cap [0,j+k-1]$, {\it backwards} instead of forwards:  
\[
B'_i = S \cap ([\overline{k}-1, \overline{k}+j-1] + i(j+1)) 
\]
for $0 \leq i \leq m$. Let $e'_i = |\{x \in B'_i \colon x \equiv p \pmod{2}\}|$ and $o'_i = |\{x \in B'_i \colon x \not\equiv p \pmod{2}\}|$. 
Note that $B'_m = S \cap [k-1, k+j-1]$.  

Since $e_0 \geq \lambda$ and $k \in D$, $e'_{m} \leq (j+1)/2 - \lambda$.  Thus let $|B'_m| = (j+1)/2 - \lambda_{m}$ for some $0\leq \lambda_{m} \leq \lambda$ with $o'_{m} \geq \lambda-\lambda_{m}$. 

By similar arguments we can assume that $|B'_i| = (j+1)/2 - \lambda_i$ and that $o'_i \geq \lambda - \sum_{t=i}^{m} \lambda_t$ for $q \leq i \leq m$. Considering that $B'_{q} = S\cap [q(j+1) +\overline{k} -1, (q+1)(j+1)+\overline{k}-2]$ 
and that 
$o_{q-1} = o_q = 0$ 
by hypothesis, it must be that $o'_{q} = 0$. Thus $\sum_{i=q}^{m} \lambda_i \geq \lambda$. Noting that if $\overline{k} \leq 1$, then $\lambda_q = 0$, and if $\overline{k} \geq 2$, then the excluded numbers counted by $\lambda_q$ all are greater than or equal to $(q+1)(j+1)$, we obtain 
\begin{align*}
|S \cap [(q+1)(j+1), j+k-1]|  
&= \frac{j+k+1 - (q+1)(j+1)}{2}  - \sum_{i=q}^{m} \lambda_i \\ 
&\leq \frac{j+k+1 - (q+1)(j+1)}{2}  - \lambda. 
\end{align*}
\noindent
\textbf{Case 2:} Let $p=1$.

\textbf{Step I.} This step is very similar to the first case. The only difference is that since $B_q$ contains all odd numbers, it might be that $(q-1)(j+1) \in B_{q-1}$ and similarly each $o_i \leq l_{i+1} +1$. 
Thus, letting $|B_i|=(j+1)/ 2 -l_i$ and 
$l = \sum_{n=0}^{q-1} l_n$, we conclude that $e_o \geq (j+1)/2 - l-1 = (j-1)/2-l$.

Since $(q+1)(j+1)-1 \in B_q$, we know $(q+1)(j+1) \not\in S$. Thus
$|S \cap [(q+1)(j+1), j+k-1]| \leq (j+k-1 - (q+1)(j+1))/2$ and therefore 
\begin{align*}
S[j+k-1] &\leq \frac{(q+1)(j+1)}{2} - l + \frac{j+k-1 - (q+1)(j+1)}{2}\\
         &= \frac{k}{2} + \frac{j-1}{2} - l. 
\end{align*}
Let $\lambda = (j-1)/2-l$.  
To complete the proof it suffices to show that
\[
|S \cap [(q+1)(j+1), j+k-1]| \leq \frac{j+k-1 - (q+1)(j+1)}{2} - \lambda.
\]

\textbf{Step II.} This step follows Case 1.
\end{proof}

We now prove \cref{4n+1,4n+3} in the Introduction, and restate them below.  

\medskip
\noindent\textbf{\cref{4n+1}.}
\textit{
Let $D = \{1,j,k\}$ with $j=4n+1$, $k$ even, and $k \leq j(j-1)/2$. 
If $k=m(j+1) + \overline{k}$ with $\overline{k}$ as described below, then 
\[
\kappa(D) = \mu(D) = 
\begin{cases}
\frac{n}{2n+1}      & \textit{if }\overline{k} = 2n, 2n+2\\
\frac{k-2m}{2(k+1)} & \textit{if } \max\{0, 2(n-m)\} \leq \overline{k} \leq 2(n-1).
\end{cases}
\]
}

\begin{proof}
\noindent 
\textbf{Case 1:} $\overline{k} = 2n$ or $\overline{k} = 2n+2$. 
Since $k \equiv \pm 1 \pmod {2n+1}$ and $j \equiv -1$ (mod $2n+1$), one can easily check that $||nD||_{2n+1} \geq n$.  Thus $\kappa(D) \geq n/(2n+1)$. It remains to show $\mu(D) \leq n/(2n+1)$. We proceed by considering two subcases.

\noindent
{\bf Case 1.1:} $\overline{k} = 2n$.  Then $k=(2m+1)(2n+1) -1$, where $1\leq m \leq 2n-1$ since $k \leq j(j-1)$. 
If $S[j] \leq 2n$ for every $D$-sequence $S$ with $0\in S$, then the results follows by \cref{H}. 
Thus assume there exists a $D$-sequence $S$ with $0 \in S$ and $S[j]=2n+1$. 
This implies $\{0,2,\dots,j-1\} \subset S$. 
By \cref{H} it is enough to show that 
$
S[k+j-1] \leq 2mn+3n-1. 
$
Because $m\leq 2n-1$, \cref{grid} implies      
$
S[j+k-1] \leq 2mn  +3n-1.  
$

\noindent
{\bf Case 1.2:} $\overline{k} = 2n+2$. Then $k = (2m+1)(2n+1) +1$, where $0 \leq m \leq 2n-1$. 
Let $S$ be a $D$-sequence with $0 \in S$. If $S[j] \leq 2n$, by \cref{H} the result follows. So we assume $S[j] \geq 2n+1$, which implies  $\{0, 2, \ldots, j-1\} \subset S$.  
Since $m \leq 2n-1$, by \cref{grid}, we have 
$
S[j+k-1] \leq 2mn  +3n. 
$
Again, the result follows by \cref{H}.

\noindent
\textbf{Case 2:} $\max\{0, 2(n-m)\} \leq \overline{k}\leq 2(n-1)$. Since $(k/2 - m)k = (k/2-m)(k+1) - (k/2-m)$ and with the assumption on $\overline{k}$ it can be verified that
\[
(2n-1)(k+1)+(k/2-m) \leq (4n+1)(k/2-m) \leq 2n(k+1)-(k/2-m). 
\] 
This implies 
$||(k/2-m)D||_{k+1} \geq k/2-m$ and thus $\kappa(D)\geq (k/2-m)/(k+1)$.

Next, we claim that $\mu(D) \leq \frac{k-2m}{2(k+1)}$. The assumption $k \leq j(j-1)/2$ yields that $m \leq 2n-1$. 
Let $S$ be a $D$-sequence with $0 \in S$.  If $|S \cap [i(j+1), i(j+1)+j]| = (j+1)/2$ for any $0 \leq i \leq (m-1)$, then by \cref{grid}, $S[j+k-1] \leq k/2$. This implies 
$
S[j+k-1]/(j+k) \leq \frac{k-2m}{2(k+1)}.   
$
By \cref{H} the result follows. 

Thus, assume $|S \cap [i(j+1), i(j+1)+j]| \leq (j-1)/2$ for all $0 \leq i \leq (m-1)$.  
Summing up, 
$S[m(j+1)-1] \leq m [ (j-1)/2 ]$. Hence, $S[k] \leq m [ (j-1)/2 ] + \overline{k}/2 = (k-2m)/2$.  Again, the result follows by \cref{H}.  
\end{proof}

Note, \cref{4n+1} implies that the boundary condition $k \geq j(j-1)/2$ in \cref{H-odd} is sharp. When $k=j(j-1)/2=8n^2+2n$ (that is, $m=2n-1$ and $\overline{k}=2n+2$), \cref{H-odd,4n+1} produce the same value of $\mu(D)$, while this is not the case when $k \neq j(j-1)/2$.

\medskip
\noindent\textbf{\cref{4n+3}.}
\textit{
Let $D = \{1,j,k\}$ with $j=4n+3$, $k$ even, and $k \leq j(j-1)/2$.
If $k=m(j+1) + \overline{k}$ with $\overline{k}$ as described below, then
\[
\kappa(D) = \mu(D) = 
\begin{cases}
\frac{n}{2n+1}      & \textit{if }\overline{k} = 2(n-m) \  \textit{and} \ m \leq 2n\\
\frac{k-2m}{2(k+1)} & \textit{if } \max\{0, 2(n-m+1)\} \leq \overline{k} \leq 2n \\
\frac{2n+1}{4n+4}   & \textit{if }\overline{k} = 2(n+1).
\end{cases}
\]
}

\begin{proof}
\noindent
\textbf{Case 1:} Let $\overline{k} = 2(n-m)$. 
Then $k = 2m(2n+1)+2n$, where $m \leq 2n$. 

First we claim $\kappa(D) \geq n/(2n+1)$. By hypotheses, we have $||k||_{2n+1} = ||j||_{2n+1}=1$, so $||nD||_{2n+1} = n$, and $\kappa(D) \geq n/(2n+1)$.

Next we prove $\mu(D) \leq n/(2n+1)$. By inequality (2), it is sufficient to show that $\chi_f (D) \geq (2n+1)/n$ by exhibiting a cycle of length $2n+1$ as a subgraph in $G(\Z, D)$ since $\chi_f(C_{2n+1})=(2n+1)/n$. As $m \leq 2n$, the following vertices form a cycle of length $2n+1$ in $G(\Z, D)$: 
$
\{0, j, 2j, \ldots, mj, mj+1, mj+2, \ldots, mj+2n-m=k\}. 
$

\noindent
\textbf{Case 2:} $\overline{k} = 2(n+1)$. Then $k=m(4n+4)+(2n+2)$, where $m \leq 2n$ as $k \leq j(j-1)/2$. 

Since $k \equiv 2n+2$ (mod $4n+4$) and $j \equiv -1$ (mod $4n+4$), one can easily check that $||(2n+1)D||_{4n+4} \geq 2n+1$.  Thus $\kappa(D) \geq (2n+1)/(4n+4)$.

Next we show $\mu(D) \leq (2n+1)/(4n+4)$.  
Let $S$ be a $D$-sequence with $0 \in S$.  
If $S[j] \leq 2n+1$,  by \cref{H} the result follows.  Thus assume 
$S[j] \geq 2n+2$, which implies that  $\{0,2,\dots,j-1\} \subset S$.
Because $m \leq 2n$, by \cref{grid}, we have 
$
S[j+k-1] \leq 2nm + 2m +n +1 \leq 2nm + m +3n+1. 
$
The result follows by \cref{H}.

\noindent
\textbf{Case 3:} $\max\{2(n-m+1), 0\} \leq \overline{k} \leq 2n$. 

Since $(k/2 - m)k = (k/2-m)(k+1) - (k/2-m)$ and with the assumptions on $\overline{k}$ it can be verified that 
\[
(2n)(k+1)+(k/2-m) \leq (4n+3)(k/2-m) \leq (2n+1)(k+1)-(k/2-m), 
\]
we obtain $||(k/2-m)D||_{k+1} \geq k/2-m$.  Thus $\kappa(D)\geq (k/2-m)/(k+1)$.

The proof that $\mu(D) \leq \frac{k-2m}{2(k+1)}$ is similar to \cref{4n+1} Case 2.
The assumption $k \leq j(j-1)$ yields that $m \leq 2n$.
Let $S$ be a $D$-sequence with $0 \in S$.  If $|S \cap [i(j+1), i(j+1)+1]| = (j+1)/2$ for some $0 \leq i \leq (m-1)$, then by \cref{grid}, $S[j+k-1] \leq k/2$, which  implies 
$
S[j+k-1]/(j+k) \leq \frac{k-2m}{2(k+1)}.   
$

Thus assume $|S \cap [i(j+1), i(j+1)+j]| \leq (j-1)/2$ for every $0 \leq i \leq (m-1)$.  Summing up, 
$S[m(j+1)-1] \leq m [ (j-1)/2 ]$. Hence, $S[k] \leq m [ (j-1)/2 ] + \overline{k}/2 = (k-2m)/2$.  By \cref{H}, the result follows.    
\end{proof}

Similar to \cref{4n+1}, \cref{4n+3} implies that the assumption $k \geq j(j-1)/2$ in \cref{H-odd} is sharp.

%
\section{$D = \{1, j, k\}$ and $j$ is even}
%
\cref{H-even} applies to most $D$-sets in this family, in the sense that for a given even integer $j$ there are only a finite number of integers $k$ such that $D=\{1,j,k\}$ is not covered by \cref{H-even}. But the  restrictions of \cref{H-even} on $k$ are not simple bounds, so those $k$ for which $\mu(D)$ is still unknown are not all consecutive. Considering the case when $k$ is a multiple of $j$, direct calculation shows that \cref{H-even} determines the values of $\mu(D)$ when $k$ is an odd  multiple of $j$.

Next we determine the value of $\mu(D)$ when $k=mj$ and $m$ is even. First, we get the upper bound for $\mu(D)$, $\mu(\{1,j,mj\}) \leq \mu (\{j,mj\}) = \frac{m}{2(m+1)}$. To establish the lower bound for  $\kappa(D)$ the ``good time'' $t$ approach used in the proof of \cref{tj-odd} no longer works. Instead, we find the kappa value by using the following lemma which holds for general three-element $D$-sets.

\begin{lemma}
\label{kappa-lemma}
Let $D = \{i,j,k\}$, where $\gcd (D)=1$ and $\gcd (i,j)=d$. If $\frac{i}{d} + \frac{j}{d} = 2x+1$ and $d \geq 2x$, then $\kappa (D) = \frac{x}{2x+1}$.
\end{lemma}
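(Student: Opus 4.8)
The plan is to prove the two bounds $\kappa(D) \le \frac{x}{2x+1}$ and $\kappa(D) \ge \frac{x}{2x+1}$ separately. The upper bound should be quick: since $||tD||$ is a minimum over the elements of the $D$-set, $\kappa$ can only decrease when elements are added, so $\kappa(D) \le \kappa(\{i,j\})$; and because $\kappa(nD) = \kappa(D)$ and $\{i,j\} = d\{i',j'\}$ with $i' := i/d$ and $j' := j/d$ coprime and $i'+j' = 2x+1$, the two-element formula $\kappa(\{a,b\}) = \lfloor (a+b)/2\rfloor/(a+b)$ of Cantor and Gordon \cite{CG} gives $\kappa(\{i,j\}) = \kappa(\{i',j'\}) = \lfloor (2x+1)/2\rfloor/(2x+1) = \frac{x}{2x+1}$.

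For the lower bound I would build an explicit good time modulo $m = i+j = d(2x+1)$. Since $\gcd(i', 2x+1) = \gcd(i', i'+j') = \gcd(i',j') = 1$, choose $t_0$ with $t_0 i' \equiv x \pmod{2x+1}$. The key observation is that for \emph{every} $t$ with $t \equiv t_0 \pmod{2x+1}$ one has $ti = t d i' \equiv d\,(ti' \bmod (2x+1)) \pmod{d(2x+1)}$, so $||ti||_m = d\,||ti'||_{2x+1} = dx$; and as $j' \equiv -i' \pmod{2x+1}$, the same computation yields $||tj||_m = dx$. These are the largest values of $||\cdot||_m$ consistent with the target ratio $\frac{x}{2x+1}$, so the elements $i$ and $j$ need no further attention, and the remaining freedom is only the choice of $t$ among the $d$ residues $\equiv t_0 \pmod{2x+1}$ inside $[0,m)$.

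The crux --- and the step I expect to be the main obstacle --- is to show this single remaining parameter already allows $k$ to be placed well, i.e.\ that some admissible $t$ has $||tk||_m \ge dx$, equivalently $tk \bmod m \in [dx,\, m-dx]$. Here I would use $\gcd(k,d) = 1$, which is immediate from $\gcd(D) = 1$ and $\gcd(i,j) = d$: as $t$ ranges over the $d$ residues $\equiv t_0 \pmod{2x+1}$, the value $tk \bmod m$ ranges over exactly the $d$ residues in $[0,m)$ that are $\equiv t_0 k \pmod{2x+1}$, and these are equally spaced with gap $2x+1$. The interval $[dx,\, m-dx]$ is a block of $d+1$ consecutive integers, so it contains $2x+1$ consecutive integers \emph{precisely because} $d \ge 2x$; and any $2x+1$ consecutive integers form a complete residue system modulo $2x+1$, hence meet the class of $t_0 k$. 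Thus an admissible $t$ with $tk \bmod m \in [dx,\, m-dx]$ exists, for it $||tD||_m = dx$, and so $\kappa(D) \ge ||(t/m)D|| = dx/(d(2x+1)) = \frac{x}{2x+1}$. Combined with the upper bound this gives the claim. (Should the produced pair $(t,m)$ fail to be coprime, dividing by $\gcd(t,m)$ leaves $||tD||_m/m$ unchanged, so this costs nothing; alternatively the estimate is read directly off the real-variable definition of $\kappa$.)
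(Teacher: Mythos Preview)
Your proof is correct and follows essentially the same route as the paper: the upper bound via $\kappa(D)\le\kappa(\{i,j\})$ and the Cantor--Gordon two-element formula, and the lower bound by working modulo $i+j=d(2x+1)$, fixing $t$ modulo $2x+1$ so that $||ti||_{i+j}=||tj||_{i+j}=dx$, then using $\gcd(k,d)=1$ to see that the $d$ admissible choices of $t$ send $tk$ onto all $d$ residues in a fixed class modulo $2x+1$, one of which must lie in the length-$(d+1)$ target interval since $d\ge 2x$. Your phrasing of the last step (``$2x+1$ consecutive integers form a complete residue system'') is a touch cleaner than the paper's, and your parenthetical about possibly non-coprime $(t,m)$ is a nicety the paper omits, but the arguments are otherwise identical.
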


\begin{proof}
First note that $\frac{x}{2x+1} = \frac{(i+j-d)/2}{i+j} = \kappa(\{i,j\})$ as proved in [4]. So it remains to show that $\kappa(D) \geq \frac{x}{2x+1}$.

The hypotheses imply that $2x+1$ and $i/d$ are relatively prime, so let $a$ be the unique solution to the congruence equation
$
a(i/d) \equiv x \pmod {2x+1}
$
such that $0\leq a \leq 2x$.
Multiplying this equation by $d$ gives
$
ai \equiv (i+j-d)/2 \pmod {i+j}.
$
This together with the fact that $j \equiv -i \pmod {i+j}$ implies that
$
aj \equiv (i+j+d)/2 \pmod {i+j}.
$
As $(2x+1)i \equiv (2x+1)j \equiv 0$ (mod $i+j$), the following holds 
for every positive integer $n$
\[
||(a+n(2x+1))i||_{i+j} =  ||(a+n(2x+1))j||_{i+j}  
= (i+j-d)/2.
\]
Thus, it suffices to show there exists  some $n$ such that $||(a+n(2x+1))k ||_{i+j} \geq \frac{i+j-d}{2}$.

Let $ak\equiv c \pmod{2x+1}$ for some $0\leq c \leq 2x$. Clearly $(a+n(2x+1))k\equiv c \pmod{2x+1}$ for all $n$. The residue classes of $(a+n(2x+1))k$ modulo $i+j$ will range over the entire set $\{c+m(2x+1) \mid 0 \leq m \leq d-1\}$ as $n$ ranges from $0$ to $d-1$. To show this, assume $p$ and $q$ are distinct integers with $0\leq p \leq q \leq d-1$ and 
\[
k(a+p(2x+1)) \equiv k(a+q(2x+1)) \pmod{i+j}.
\]
Then, $k(q-p) = ld$ for some positive integer $l$. This is impossible as $(q-p) < d$ and gcd$(k,d)=1$. 

Because $\frac{i+j+d}{2} -\frac{i+j-d}{2} = d$ and by the hypothesis $d \geq 2x$, there exists some $m \in \{0,1,\dots,d-1\}$ such that $\frac{i+j-d}{2} \leq c+m(2x+1) \leq \frac{i+j+d}{2} $, as needed. Thus, $\kappa(D) \geq x/(2x+1)$. 
\end{proof}

\begin{theorem}
\label{tj-even}
Let $D = \{1,j,mj\}$ with both $j$ and $m$ even and $m \leq j$. Then $\mu(D) = \kappa(D) = \frac{m}{2(m+1)}$.
\end{theorem}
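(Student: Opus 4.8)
The plan is to pin down $\mu(D)$ by sandwiching: combine the Cantor--Gordon inequality $\mu(D)\ge\kappa(D)$ from (\ref{eq:1}) with an elementary monotonicity upper bound on $\mu$, and obtain the matching lower bound on $\kappa(D)$ by a single application of \cref{kappa-lemma}.

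\emph{Upper bound.} Every $\{1,j,mj\}$-sequence is in particular a $\{j,mj\}$-sequence, so $\mu(D)\le\mu(\{j,mj\})$. Since $\gcd(j,mj)=j$, we have $\{j,mj\}=j\{1,m\}$, hence $\mu(\{j,mj\})=\mu(\{1,m\})$ by the scaling invariance $\mu(D)=\mu(nD)$. By the two-element formula of Cantor and Gordon, $\mu(\{1,m\})=\lfloor (m+1)/2\rfloor/(m+1)$, which equals $\tfrac{m}{2(m+1)}$ because $m$ is even. Therefore $\mu(D)\le\tfrac{m}{2(m+1)}$.

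\emph{Lower bound.} Apply \cref{kappa-lemma} to $D$, letting $1$ play the role of the distinguished element ``$k$'' of that lemma and the pair $\{j,mj\}$ play the role of ``$\{i,j\}$''. Then $\gcd(D)=\gcd(1,j,mj)=1$, we have $d:=\gcd(j,mj)=j$, and $\tfrac{j}{d}+\tfrac{mj}{d}=1+m$, which is odd since $m$ is even; thus in the notation of the lemma $2x+1=m+1$, i.e.\ $x=m/2$. The hypothesis $d\ge 2x$ reads $j\ge m$, which holds by assumption (and $\gcd(1,d)=1$ is automatic, so the divisibility step in the lemma's proof goes through with ``$k$''$=1$). \cref{kappa-lemma} then yields $\kappa(D)=\tfrac{x}{2x+1}=\tfrac{m}{2(m+1)}$.

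Putting the two bounds together with (\ref{eq:1}) gives $\tfrac{m}{2(m+1)}=\kappa(D)\le\mu(D)\le\tfrac{m}{2(m+1)}$, so equality holds throughout. There is no serious obstacle here; the only point that requires a moment's care is the bookkeeping in applying \cref{kappa-lemma} --- one must assign $1$ to the role of ``$k$'' rather than ``$i$'', so that $d=j$ is the large common divisor needed for $d\ge 2x$; with the naive labeling $i=1$ one would get $d=1$ and the hypothesis would fail.
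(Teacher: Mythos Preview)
Your proof is correct and follows exactly the same route as the paper's own argument: the upper bound via $\mu(D)\le\mu(\{j,mj\})=\mu(\{1,m\})=\tfrac{m}{2(m+1)}$, and the lower bound by a direct application of \cref{kappa-lemma} with the pair $\{j,mj\}$ in the role of $\{i,j\}$ so that $d=j$, $x=m/2$, and $d\ge 2x$ reduces to $j\ge m$. Your added remark about the necessary role assignment in the lemma is a helpful clarification but not a departure from the paper's method.
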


\begin{proof}
First, $\mu(D) \leq \mu (\{j, mj\}) = \mu (\{1,m\}) = \kappa (\{1,m\}) = \frac{m}{2(m+1)}$.
When $m\leq j$, by \cref{kappa-lemma},  $\mu(D) \geq \kappa(D) = \frac{m}{2(m+1)}$.
\end{proof}

Note, when $k=mj$ and both $m$ and $j$ are even, \cref{H-even} applies for $m \geq j-2$.  Thus, \cref{H-even,tj-even} together settle the family $D=\{1,j,k\}$ where $k$ is an even multiple of $j$. When $m \in \{j-2,j\}$, both \cref{H-even,tj-even} apply, but when $m \leq j-4$ \cref{tj-even} gives a different value than \cref{H-even} would without the boundary condition on $k$. Thus, the boundary condition in \cref{H-even} is sharp when $k$ is a multiple of $j$.

Recall \cref{new} form the Introduction, which shows that the boundary condition in \cref{H-even} is also sharp when $k \equiv 3$ (mod $j+1$).

\medskip
\noindent\textbf{\cref{new}.}
\textit{Let $D=\{1, j, k\}$, where $j$ is even, $k=m(j+1)+3$, and $1 \leq m \leq j-3$.  
Then
\[
\mu(D) = \kappa(D) =\frac{j(m+1)}{2(j+k)}. 
\]}
 
\begin{proof}
Let $t=(j/2)(m+1)+1$. Since $tj= (j/2)(j+k) -(j/2)(m+1)$ and $tk= (mj/2 +1)(j+k) + (j/2)(m+1)$, we conclude that $||tD||_{j+k} =(j/2)(m+1)$ and $\mu(D) \geq \kappa(D) \geq \frac{j(m+1)}{2(j+k)}$.

To show that $\mu(D) \leq \frac{j(m+1)}{2(j+k)}$, let $S$ be a $D$-sequence with $0 \in S$. Partition $S \cap [0,k]$ into sets $A_i = S \cap ([0,j] + i(j+1))$ 
for $0\leq i \leq m-1$ and $B = S \cap [m(j+1), k]$. Note that $|A_i| \leq j/2$ for all $i$, and since $k\in D$ and $0\in S$, $k \not\in B$. If $S[k] < (mj/2)+2$, then by \cref{H} we are done, thus we can assume that $|A_i| = j/2$ for all $i$ and that $B=\{m(j+1),m(j+1)+2\} =\{k-3, k-1\}$. 
Considering the structure of $B$ we know that 
$(\{1,3,j\}+(m-1)(j+1))\cap A_{m-1} = \emptyset$. 
We also know that $\{0,2\}+(m-1)(j+1) \subset A_{m-1}$, since otherwise $|A_{m-1}| < j/2$. Together these facts determine the structure of $A_{m-1}$:
$$
A_{m-1} = \{0,2,\dots,e_1-2, e_1+1, e_1+3,\dots, j-1\} + (n-1)(j+1)
$$
where $e_1$ is even and $e_1 \geq 4$. Applying similar arguments to $A_{m-2},A_{m-3},\dots,A_0$, we obtain
$$
A_0 = \{0,2,\dots,e_m-2, e_m+1, e_m+3,\dots, j-1\}
$$
where $e_m$ is even and $e_m\geq e_{m-1} \geq \cdots \geq e_1 \geq 4$.

Now let $A_m = S \cap [m(j+1),(m+1)(j+1)-1]$ and $C = S\cap \{j+k-2, j+k-1\}$. If $e_m = j$, that is $j-2 \in A_0$, then $|C| = 0$ and $S[j+k-1] \leq (m+1)j/2$, and the result follows from \cref{H}. Thus we can assume that $4 \leq e_1 \leq e_m \leq j-2$. The fact that $(m-1)(j+1) + e_1 +1, (m-1)(j+1) + e_1 +3 \in S$  (note, if $e_1=j-2$, then $(m-1)(j+1) + e_1 +3=m(j+1)$ which is in $B$ as discussed above) 
forces $m(j+1)+e_1, m(j+1)+e_1 +2 \not\in A_m$, and the fact that $e_1-2 \in S$ forces $m(j+1)+e_1+1\not\in A_m$. So $|A_m| \leq j/2 -1$ and $S[j+k-1] \leq (m+1)j/2$, and the result follows from \cref{H}.
\end{proof}

%
\section{Optimal sequences and fractional coloring}
%

While we have proved the exact value of $\mu(D)$ for several families of $D$-sets, none of these proofs are constructive. Examining the form of maximally dense $D$-sequences gives another interesting perspective to the problem. We shall write the elements of a $D$-sequence $S$ in an increasing order, $S = s_0, s_1, s_2, \ldots$ with $s_0 < s_1 < s_2 < \ldots$, and denote the {\it difference sequence} of $S$ by $\Delta(S) = \delta_0, \delta_1, \delta_2, \ldots$ where $\delta_i = s_{i+1} - s_i$. A subsequence of consecutive terms in $\Delta(S)$, $\delta_a, \delta_{a+1}, \ldots, \delta_{a+b-1}$, generate a periodic interval of $k$ copies, $k \geq 1$, if $\delta_{j(a+b)+i} = \delta_{a+i}$ for all $0 \leq i \leq b-1$, $1 \leq j \leq k-1$. Denote such a periodic subsequence by $(\delta_a, \delta_{a+1}, \ldots, \delta_{a+b-1})^k$. If the periodic interval repeats infinitely we simply denote it by $(\delta_a, \delta_{a+1}, \ldots, \delta_{a+b-1})$. One can easily calculate the density of $S$ from $\Delta(S)$, e.g., if $\Delta(S)=((a,b)^2,c^3,e)$, then $\delta(S) = 8/(2(a+b)+3c+e)$. An {\it optimal sequence} has density equal to $\mu(D)$.

From inequality (1), one can use $\kappa(D)$ to provide a lower bound for $\mu(D)$. Tabulating these values for $D$-sets is an effective way to discover patterns. See \cref{kappa_j_odd,kappa_j_even} in the Appendix. The value of $\kappa(D)$ can be computed using the following algorithm, which takes advantage of the fact that $\kappa(D)$ must have the sum of two elements of the $D$-set as denominator. The second procedure of the algorithm, \textsc{D-Seq}, finds the associated $D$-sequence and is essentially the computational version of Cantor and Gordon's proof that $\kappa(D) \leq \mu(D)$ \cite{CG}.  
The idea is to use the ``good time'' $t$ associated with the calculated value of $\kappa(D)=c/m$. 
Since $||td||_{m} \geq c$ for all $d \in D$, the set 
$\cup_{r=0}^{c-1}\{n \colon tn \equiv r \pmod{m}\}$
forms a periodic $D$-sequence with density $c/m$. 
Note that \textsc{D-Seq} works with any input values for circumference $m$ and time $t$ so long as they are relatively prime. Furthermore,   
\textsc{D-Seq} produces an optimal $D$-sequence if $\kappa(D) = \mu(D)$. 

\begin{algorithm}[H]
\caption{Find kappa value of a given $D$-set and associated $D$-sequence}
\label{alg:kappa}
\begin{algorithmic}[1]
\Procedure{kappa}{D}
\State $K \gets \emptyset$ \Comment{Intialize set of possible kappa values to maximize}
\State circumferences $\gets \{a+b \colon a \in D \text{ and } b \in D \text{ and } a \not= b\}$
\For{$m$ in circumferences}
	\For{time $t \leq m/2$}
    	\State minDist $\gets \min\{|td|_m \colon d \in D\}$
        \State Append the ratio minDist/$m$ to $K$
    \EndFor
\EndFor
\State \textbf{return} $\max(K)$ \Comment{The kappa value for $D$}
\EndProcedure
\Procedure{D-Seq}{$D$, $m$, $t$} \Comment{$\gcd(m,t) = 1$}
\State $S \gets \emptyset$ \Comment{Intialize $D$-sequence}
\State minDist $\gets \min\{|td|_m \colon d \in D\}$
\For{$0\leq r \leq \text{minDist} - 1$}
	\State $n \gets$ the solution to the eqn $r \equiv tn \pmod{m}$ such that $0 \leq n \leq m-1$
    \State Append $n$ to $S$
\EndFor
\State \textbf{return} Sort $S$ \Comment{S is a periodic subsequence that generates a $D$-sequence}
\EndProcedure
\end{algorithmic}
\end{algorithm}

Following is a table of optimal difference sequences for some of the $D$-sets studied in this paper. 
\Cref{diffseq_end} in the Appendix 
shows some particular examples.

\begin{table}[H] \centering
\caption{Optimal Difference Sequences for some $D= \{1,j,k\}$}
\label{diffseq}
\begin{tabular}{llll} \toprule
$j$     & $k$                & $\Delta(S)$                     & Theorem             \\ \midrule
$2n+1$  & $mj,$ \ \text{$m$ even}               & $(2^{mj/2 -1}, j+2)$            & \cref{tj-odd}        \\
$4n+1$  & $m(j+1) + 2n$      & $(2^{n-1},3)$                   & \cref{4n+1}, Case 1   \\
$4n+1$  & $m(j+1) + 2n+2$    & $(2^{n-1},3)$                   & \cref{4n+1}, Case 1   \\
$4n+3$  & $m(j+1) + 2(n-m)$  & $(2^{n-1},3)$                   & \cref{4n+3}, Case 1   \\
$4n+3$  & $m(j+1) + 2(n+1)$  & $(2^{n},3,2^{n-1},3)$           & \cref{4n+3}, Case 2   \\
$2n$    & $m(j+1) + 3$       & $(2^{j/2-1}, 3)^m(2^{j/2-1}, 5)$ & \cref{new}           \\
 \bottomrule
\end{tabular}
\end{table}

There is a natural correspondence between 
a periodic $D$-sequence and a certain class of periodic fractional colorings of the distance graph $G(\Z, D)$.
\begin{prop}
Let $D$ be a set of nonnegative integers. Then there is a periodic $D$-sequence $S$ with $\Delta(S) = (\delta_0, \delta_1, \dots, \delta_{n-1})$ and $\sum_{i=0}^{n-1}\delta_i = m$ if and only if there is a periodic $(m/n)$-coloring of $G(\Z,D)$ defined for each $v$ in the vertex set of $G(\Z, D)$ by $v \mapsto \{v+\delta_0, v+\delta_0 + \delta_1, \dots, v + m\}$ where addition is carried out modulo $m$.
\end{prop}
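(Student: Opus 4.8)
The plan is to prove the two directions of the biconditional separately, in each case translating between the combinatorial data of a $D$-sequence and the set-assignment data of a fractional coloring. Throughout I would work with the bijection between $\Z/m\Z$ and a fundamental domain of the periodic sequence, thinking of a periodic $D$-sequence $S$ of period $m$ with $n$ terms per period as a subset $T \subseteq \Z/m\Z$ with $|T| = n$; the condition that $S$ is a $D$-sequence becomes the statement that $T$ contains no two elements differing (mod $m$) by an element of $D$ reduced mod $m$. Correspondingly, an $(m/n)$-coloring assigning each vertex an $n$-subset of $[m]$ that is "periodic with period $m$" and "translation-equivariant" is determined by the single set $f(0)$, with $f(v) = f(0) + v \pmod m$, and the proper-coloring condition $f(u)\cap f(v)=\emptyset$ whenever $u-v \in D$ reduces to $f(0) \cap (f(0) + d) = \emptyset$ for all $d \in D$. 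So both objects are really encoded by a single $n$-element "template" subset of $\Z/m\Z$ avoiding all internal $D$-differences, and the content of the proposition is just bookkeeping about which template each construction produces.

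For the forward direction, I would start from $S$ with $\Delta(S) = (\delta_0,\dots,\delta_{n-1})$ and $\sum \delta_i = m$. Fixing the representative $s_0 = 0$, the terms of $S$ in one period are the partial sums $0, \delta_0, \delta_0+\delta_1, \dots, \delta_0 + \cdots + \delta_{n-2}$, and $m$ closes the cycle. The coloring in the statement assigns to each $v$ the set $\{v+\delta_0,\ v+\delta_0+\delta_1,\ \dots,\ v+m\}$ mod $m$; note that $v+m \equiv v$, so this set is exactly $v$ plus the set of partial sums shifted by one index — i.e., $f(v) = v + (\{$partial sums$\} \setminus \{0\}) \cup \{0\}$, which is just a translate of $T := \{0,\delta_0,\delta_0+\delta_1,\dots\}$ reduced mod $m$. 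I would check $|f(v)| = n$ (the partial sums are distinct mod $m$ precisely because the $\delta_i$ are positive and sum to $m$), and then verify that $uv \in E(G(\Z,D))$, i.e. $u - v \equiv \pm d$, forces $f(u) \cap f(v) = \emptyset$: an intersection would give two elements of $T$ differing by $d$ mod $m$, contradicting that $S$ is a $D$-sequence. Hence $\chi_f \le m/n$ is witnessed, and this particular function is a valid $(m/n)$-coloring.

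For the reverse direction, given such a periodic $(m/n)$-coloring $v \mapsto \{v+\delta_0, v+\delta_0+\delta_1,\dots,v+m\}$ with the $\delta_i$ positive integers summing to $m$, I would simply read off the sequence $S$ whose difference sequence is $(\delta_0,\dots,\delta_{n-1})$, repeated periodically, and argue it is a $D$-sequence: if it were not, two of its terms would differ by some $d \in D$ (mod $m$ at the level of the period), but then the template set $\{0,\delta_0,\delta_0+\delta_1,\dots\}$ would contain two elements differing by $d$, which makes $f(0) \cap f(-d) \ne \emptyset$ while $0$ and $-d$ are adjacent in $G(\Z,D)$ — contradicting properness. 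The density computation $\delta(S) = n/m$ matches the coloring ratio by construction.

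The main obstacle — really the only subtlety — is making the modular arithmetic airtight: one must be careful that "the $\delta_i$ are positive integers" together with $\sum\delta_i = m$ is exactly what guarantees the $n$ partial sums are pairwise distinct mod $m$ (so $|f(v)| = n$ on the nose, not less), and that a difference of two sequence terms lands in $D$ "as an integer" iff the corresponding template elements differ by that value mod $m$ — here one uses that all elements of $D$ are at most $\max(D)$ and the relevant indices live within a bounded window, or else one phrases the whole equivalence at the level of $\Z/m\Z$ from the outset and notes that $G(\Z,D)$ being a $\Z$-Cayley graph means its periodic proper colorings correspond exactly to $\Z/m\Z$-Cayley-graph colorings. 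I would phrase it the latter way to keep the argument short and avoid edge cases when $m$ is small relative to $\max(D)$.
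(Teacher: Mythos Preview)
Your approach is correct and essentially the same as the paper's: both reduce the equivalence to the observation that a periodic $D$-sequence with period $m$ and the translation-equivariant $(m/n)$-coloring are each encoded by a single $n$-element ``template'' subset of $\Z/m\Z$ containing no two elements whose difference lies in $D$ modulo $m$. The paper runs this as a terse chain of congruences (rewriting $\sum_{i=a}^b \delta_i \not\in D$ as a mod-$m$ condition and then as disjointness of color sets), whereas you name the template set explicitly and treat the two directions separately---but the content is identical, and if anything you are more careful than the paper about the passage from integer differences to residues mod $m$.
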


\begin{proof}
Fixing a $D$-set, let $S$ be a sequence with $\Delta(S) = (\delta_0, \delta_1, \dots, \delta_{n-1})$ and $\sum_{i=0}^{n-1}\delta_i = m$. By definition, $S$ is a $D$-sequence if and only if $\sum\limits_{i=a}^{b} \delta_i \not\in D$ for all $a\leq b$, which is equivalent to $
\delta_a + \delta_{a+1}+ \cdots + \delta_{b} \not\equiv d  \pmod{m}
$
holding true for all $d \in D$. Replacing $d$ with any two adjacent vertices $u$ and $v$ in the distance graph where $v<u$, we get
\begin{align*}
\delta_a + \delta_{a+1}+ \cdots + \delta_{b} &\not\equiv u-v &\pmod{m} \\
v + \delta_0 + \cdots +\delta_b &\not\equiv u + \delta_0 + \cdots +\delta_{a-1} &\pmod{m}.
\end{align*}
This holds if and only if for any adjacent vertices $u$ and $v$ the sets $\{u+\delta_0, u+\delta_0 + \delta_1, \dots, u + m\}$ and $\{v+\delta_0, v+\delta_0 + \delta_1, \dots, v + m\}$ are disjoint and thus if and only if the $(m/n)$-coloring defined in the statement of the lemma is proper.
\end{proof}

\section{Conclusion}

Directly computing the value of $\mu(D)$ and finding an optimal periodic sequence is a difficult task in general, while 
finding a lower bound is relatively easy, either by using the kappa value or finding a dense $D$-sequence. In the face of such difficulty, a conjecture of Haralambis is particularly interesting.
\begin{guess} {\rm \cite{H}}
\label{h-conj}
Every 3-element $D$-set has $\kappa(D) = \mu(D)$.
\end{guess}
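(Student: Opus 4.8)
The easy half of \cref{h-conj} is already in hand: inequality (1) gives $\mu(D)\ge\kappa(D)$ for every $D$, so the whole content of the conjecture is the reverse bound $\mu(D)\le\kappa(D)$, equivalently, by inequality (2), the lower bound $\chi_f(D)\ge 1/\kappa(D)$ on the fractional chromatic number of $G(\Z,D)$. By the reductions discussed before \cref{H} one may assume $\gcd(D)=1$ and write $D=\{a,b,c\}$ with $a<b<c$; if all three entries are odd then $\mu(D)=\kappa(D)=1/2$ and we are done, so it remains to treat $D$ with exactly one even member and $D$ with exactly two even members. The plan is to organise the argument around the denominator of $\kappa(D)$, which by the fact exploited in \cref{alg:kappa} is one of the pairwise sums $a+b$, $a+c$, $b+c$: fix the pairwise sum $m$ realising $\kappa(D)=||sD||_m/m$, record the good time $s$ explicitly (as is done throughout Sections~3 and~4), and then analyse an arbitrary $D$-sequence through a block decomposition tuned to the arithmetic of $D$ and $m$.

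Concretely, I would proceed in three steps. First, for the chosen modulus, exhibit $s$ and the associated periodic $D$-sequence, so that the lower bound $\mu(D)\ge\kappa(D)$ and the exact value are on record; this is always elementary. Second, partition an arbitrary $D$-sequence $S$ with $0\in S$ into consecutive blocks of a suitable length (length $j+1$ in the $D=\{1,j,k\}$ arguments), bound the occupancy of each block using the smallest element of $D$, and propagate the resulting deficits forward and backward through the constraints imposed by the other two elements, exactly as in the two-step argument of \cref{grid} and in the proof of \cref{new}. Third, conclude from \cref{H} that some finite prefix of $S$ has density at most $\kappa(D)$, hence $\mu(D)\le\kappa(D)$. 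In the subfamilies where the extremal structure is an odd cycle one can shortcut the second step, as in \cref{4n+3} Case~1: display a copy of $C_{2\ell+1}$ inside $G(\Z,D)$ and invoke $\chi_f(C_{2\ell+1})=(2\ell+1)/\ell$.

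The natural order of attack is to first complete the family $D=\{1,j,k\}$, combining \cref{H-odd,H-even}, the theorems of Carraher et al., and \cref{4n+1,4n+3,new}, and filling the remaining cases; then to lift to $D=\{i,j,k\}$ with $j$ odd along the lines of Gupta's reduction; and finally to attack $\{i,j,k\}$ in which no member divides the others, where there is no reduction to a set containing $1$ and a genuinely new block length must be found. One should not expect a single good time or a single closed form to suffice: the extremal value is sometimes of the shape $\ell/(2\ell+1)$ (the odd-cycle regime) and sometimes of the shape $k/(2(k+j))$ or $j(m+1)/(2(j+k))$ (the block-deficit regime), and the transition between regimes is controlled by arithmetic inequalities among the residues of the largest element --- precisely the phenomenon that makes the boundary conditions of \cref{H-odd,H-even} sharp.

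The main obstacle, and the reason \cref{h-conj} is still open, is the second step: bounding $\mu(D)$ from above. Since $\kappa(D)$ is an explicit finite optimisation, the inequality $\mu(D)\ge\kappa(D)$ is never where the difficulty lies; but establishing $\mu(D)\le\kappa(D)$ means ruling out every $D$-sequence of density exceeding $\kappa(D)$, and \cref{H} only reduces this to a statement about finite prefixes, after which one is left with a delicate, case-sensitive analysis of how the three forbidden separations interact inside a block decomposition. At present each such analysis is custom-built for the arithmetic of a particular subfamily, and no uniform device --- for instance a structural classification of the optimal periodic difference sequences in the spirit of \cref{diffseq}, or an LP-duality certificate pairing $\kappa(D)$ with a fractional clique in $G(\Z,D)$ --- is known that would settle all three-element $D$-sets at once.
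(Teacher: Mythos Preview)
The statement you were asked to prove is \cref{h-conj}, which is a \emph{conjecture}, not a theorem: the paper does not prove it. The paper merely records that computer calculations confirm it for $\max(D)\le 25$ and that it fails for $|D|\ge 4$. So there is no ``paper's own proof'' to compare your proposal against.

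Your write-up is not a proof either, and to your credit you say so explicitly in the final paragraph: you correctly identify that the upper bound $\mu(D)\le\kappa(D)$ is the entire difficulty, that \cref{H} reduces it to a prefix-density statement, and that every known instance is handled by a bespoke block argument with no uniform mechanism in sight. What you have written is a reasonable survey of the available tools (good times for the lower bound, \cref{H} plus block decompositions or odd-cycle subgraphs for the upper bound) and a plausible order of attack, but it is a research plan, not a proof. The concrete ``steps'' you outline --- exhibit $s$, partition into blocks, propagate deficits --- are exactly what the paper does for the specific subfamilies in \cref{4n+1,4n+3,new}, and the paper's results themselves show that the transitions between regimes are delicate enough that no single block length or single deficit-propagation scheme covers even all of $\{1,j,k\}$. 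Until someone supplies the missing uniform device you allude to (a structural classification of optimal periodic sequences, or a fractional-clique certificate matching $\kappa(D)$), \cref{h-conj} remains open, and your proposal does not close it.
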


Computer calculations have confirmed \cref{h-conj} for $\max(D) \leq 25$. 
For $|D| \geq 4$, there exist 4-element $D$-sets with $\mu(D) > \kappa(D)$ (cf. \cite {H, LZ})  that can be extended to $D$-sets of any cardinality greater than 4 which maintain this strict inequality.

As an example of how the kappa value can be more easily dealt with, we end with a final proposition.

\begin{prop}
\label{last}
Let $D = \{1,j,k\}$ with $j = 2n+1$ and $k = m(j+1) + \overline{k}$. For any positive integer $a$, 
\[
\kappa(D) \geq
\begin{cases}
\frac{k/2 - am}{k+1}  & \textit{ if } \frac{n-a-2am}{a} \leq \overline{k} \leq \frac{n-a+1}{a} \\
\frac{k/2 -am - a +1}{k+1} & \textit{ if } \frac{2an - n -a -2am}{a} \leq \overline{k} \leq \frac{2an-n+a-1}{a}.
\end{cases}
\]
\end{prop}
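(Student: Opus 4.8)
The plan is to mimic, for general $a$, the ``good time'' arguments used for the lower bounds in the proofs of \cref{4n+1,4n+3}: for each of the two ranges of $\overline{k}$ I would exhibit an explicit integer $t$ and show $||tD||_{k+1}\ge c$, where $c$ is the numerator on the right-hand side. Since $\kappa(D)=\sup\{||sD||\colon s\in\R\}\ge ||(t/(k+1))D||=||tD||_{k+1}/(k+1)$, this suffices and no coprimality condition on $t$ is required. I may assume $c\ge 1$ (otherwise the claim is trivial) and that $k$ is even (if $k$ is odd then every element of $D$ is odd, so $\kappa(D)=1/2>c/(k+1)$). Two facts will be used repeatedly: $k\equiv -1\pmod{k+1}$, so $||tk||_{k+1}=||t||_{k+1}$ and it is enough to bound $||t||_{k+1}$ and $||tj||_{k+1}$; and $2m(n+1)=k-\overline{k}$, which comes from $k=m(j+1)+\overline{k}$ and drives the reduction of $tj$ modulo $k+1$.

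For the first range I would take $t=k/2-am$; then $0\le t\le (k+1)/2$, so $||t||_{k+1}=t=c$. The core step is the exact identity
\[
tj=(n-a)(k+1)+\frac{k}{2}+a(\overline{k}+m+1)-n,
\]
obtained by expanding $tj=(k/2-am)(2n+1)$ and repeatedly substituting $2m(n+1)=k-\overline{k}$ and $k\equiv -1$. A one-line rearrangement then shows that the hypothesis $\frac{n-a-2am}{a}\le \overline{k}\le \frac{n-a+1}{a}$ is exactly equivalent to $c\le \frac{k}{2}+a(\overline{k}+m+1)-n\le (k+1)-c$ (the left inequality becoming $a\overline{k}\ge n-a-2am$ and the right one $a\overline{k}\le n-a+1$). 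Because $c\ge 1$, the quantity $\frac{k}{2}+a(\overline{k}+m+1)-n$ lies in $[0,k+1)$, hence equals $tj\bmod(k+1)$, and it lies in $[c,k+1-c]$; therefore $||tj||_{k+1}\ge c$ and $\kappa(D)\ge c/(k+1)=\frac{k/2-am}{k+1}$.

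The second range is identical in spirit, with $t=k/2-am-a+1$; again $||t||_{k+1}=t=c$, and the analogous expansion yields the exact identity
\[
tj=(n-a)(k+1)+\frac{k}{2}+\bigl(a\overline{k}+am-2an+n+1\bigr).
\]
The hypothesis $\frac{2an-n-a-2am}{a}\le \overline{k}\le \frac{2an-n+a-1}{a}$ then rearranges precisely to $c\le \frac{k}{2}+(a\overline{k}+am-2an+n+1)\le (k+1)-c$, so $||tj||_{k+1}\ge c$ and $\kappa(D)\ge c/(k+1)$.

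The substantive work is confined to the two modular reductions of $tj$; the only mild surprise is that the ``wrap count'' turns out to be $n-a$ in both cases, and the one point that needs care is confirming that the residue written down genuinely lies in $[0,k+1)$ rather than being off by a multiple of $k+1$ --- this follows from $c\ge1$, i.e.\ from $k/2\ge am$ in the first case and $k/2\ge a(m+1)$ in the second. Beyond this routine bookkeeping I do not foresee any real obstacle.
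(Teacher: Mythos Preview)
Your proposal is correct and follows essentially the same route as the paper: both use the ``good time'' $t=k/2-am$ (respectively $t=k/2-am-a+1$) with modulus $k+1$, reduce $tk\equiv -t$ and $t\cdot 1\equiv t$, and then verify that $tj$ lands in the interval $[(n-a)(k+1)+c,\,(n-a+1)(k+1)-c]$. You carry out the verification explicitly via the identity $tj=(n-a)(k+1)+k/2+a(\overline{k}+m+1)-n$ (and its analogue in the second range), whereas the paper simply asserts the double inequality ``can be verified''; your added care about the parity of $k$, the assumption $c\ge 1$, and the irrelevance of $\gcd(t,k+1)$ are all valid and fill in points the paper leaves implicit.
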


\begin{proof}
\textbf{Case 1.} Let $\frac{n-a-2am}{a} \leq \overline{k} \leq \frac{n-a+1}{a}$.
\medskip

Since \( (k/2 -am)k = (k/2 -am)(k+1) - (k/2 - am) \), and with the assumption on $\overline{k}$ it can be verified that
\[
(n-a)(k+1) + (k/2 - am) \leq (k/2 - am)j \leq (n-a+1)(k+1) - (k/2 - am),
\]
we obtain that $||(k/2 - am)D||_{k+1} \geq k/2 - am$, and thus $\kappa(D) \geq \frac{k/2 - am}{k+1}$. 
\medskip

\noindent\textbf{Case 2.} Let $\frac{2an - n -a -2am}{a} \leq \overline{k} \leq \frac{2an-n+a-1}{a}$.
\medskip

This case is similar to Case 1, since it can be verified that
\[
(n-a)(k+1) + (k/2 - am-a+1) \leq (k/2 - am-a+1)j \leq (n-a+1)(k+1) - (k/2 - am-a+1).
\]
\end{proof}

\noindent
When $a=1$ this formula becomes the same as parts of \cref{4n+1,4n+3}. Recall that the denominator of the kappa value of a $D$-set must be the sum of two elements in $D$.  Hence the three possible denominators of the kappa value for $D=\{1,j,k\}$ are $k+1$, $j+1$, and $k+j$. All computer generated data show that equality holds for \cref{last} for all $D$-sets of the form $\{1,j,k\}$ with $j$ odd that have $k+1$ as the denominator of their kappa value. However there are some $D$-sets satisfying the conditions of the proposition which have a kappa value strictly greater than the bound in \cref{last}, but these kappa values have $j+k$ or $j+1$ as the denominator.




\newpage
\appendix 
\section{Tables}
\normalsize
\begin{table}[H]
\caption{Optimal Difference Sequences for some $D= \{1,j,k\}$}
\label{diffseq_end}
\centering
\begin{tabular}{llll} \toprule
$j$     & $k$       & $\Delta(S)$                                     & Theorem      \\ \midrule
$5$     & $6$       & $(3,4)$                                         & \cref{4n+1}, Case 2  \\
$7$     & $10$      & $(2,3,3,3) = (2,3^3)$                                     & \cref{4n+3}, Case 3  \\
$7$     & $16$      & $(2,3,3,3,3,3)=(2,3^5)$                                 & \cref{4n+3}, Case 3  \\
$7$   & $18$     & $(2,3,3,2,3,3,3)=((2,3,3)^2,3)$                             & \cref{4n+3}, Case 3  \\
$9$     & $12$      & $(2,3,2,3,3) = ((2,3)^2,3)$                                   & \cref{4n+1}, Case 2  \\
$9$     & $20$      & $(2,3,2,3,3,2,3,3)= ((2,3),(2,3,3)^2)$                             & \cref{4n+1}, Case 2  \\
$9$     & $22$      & $(2,3,2,3,2,3,2,3,3) = ((2,3)^4,3)$                           & \cref{4n+1}, Case 2  \\
$9$     & $30$      & $((2,3)^3,3,(2,3)^2,3)$                     & \cref{4n+1}, Case 2  \\
$9$     & $32$      & $((2,3)^6,3)$                   & \cref{4n+1}, Case 2  \\
$11$    & $16$      & $(2,(2,3)^3)$                               & \cref{4n+3}, Case 3  \\
$11$    & $26$      & $(2,(2,3)^5)$                       & \cref{4n+3}, Case 3  \\
$11$    & $28$      & $(2,(2,3)^3,2, (2,3)^2)$                     & \cref{4n+3}, Case 3  \\
$11$    & $36$      & $(2,(2,3)^7)$               & \cref{4n+3}, Case 3  \\
$11$    & $38$      & $(2,(2,3)^4,2,(2,3)^3)$             & \cref{4n+3}, Case 3  \\
$11$    & $40$      & $(2,(2,3)^3,2,(2,3)^2,2,(2,3)^2)$           & \cref{4n+3}, Case 3  \\
$11$    & $48$      & $(2,(2,3)^5,2,(2,3)^4)$     & \cref{4n+3}, Case 3  \\
$11$    & $50$      & $(2,(2,3)^3)$                               & \cref{4n+3}, Case 3  \\
$11$    & $52$      & $(2,(2,3)^3,(2,(2,3)^2)^3$ & \cref{4n+3}, Case 3  \\
$13$    & $18$      & $((2,2,3)^2,2,3)$                             & \cref{4n+1}, Case 2  \\
$13$    & $30$      & $((2,2,3)^2,2,3,2,2,3,2,3)$                   & \cref{4n+1}, Case 2  \\
$13$    & $32$      & $((2,2,3)^4,2,3)$                 & \cref{4n+1}, Case 2  \\
$13$    & $42$      & $((2,2,3)^2,2,3,(2,2,3,2,3)^2)$         & \cref{4n+1}, Case 2  \\
$13$    & $44$      & $((2,2,3)^3,2,3,(2,2,3)^2,2,3)$       & \cref{4n+1}, Case 2  \\
$13$    & $46$      & $((2,2,3)^6,2,3)$     & \cref{4n+1}, Case 2  \\
$13$    & $56$      & $((2,2,3)^2,2,3)$                                  & \cref{4n+1}, Case 2  \\
$13$    & $58$      & $((2,2,3)^4,2,3,(2,2,3)^3,2,3)$  & \cref{4n+1}, Case 2  \\
$13$    & $60$      & $((2,2,3)^8,2,3)$  & \cref{4n+1}, Case 2  \\
$13$    & $70$      & $((2,2,3)^3,2,3,(2,2,3)^3,2,3,(2,2,3)^2,2,3)$  & \cref{4n+1}, Case 2  \\
$13$    & $72$      & $((2,2,3)^5,2,3,(2,2,3)^4,2,3)$  & \cref{4n+1}, Case 2  \\
$13$    & $74$      & $((2,2,3)^{10},2,3)$  & \cref{4n+1}, Case 2  \\
 \bottomrule
 \end{tabular}
\end{table}

\begin{landscape}
\begin{table} 
\caption{$\kappa(\{1,j,k\})$ when $j$ is odd and $k$ is even}
\label{kappa_j_odd}
\centering \small
\begin{tabular}{*{16}{r}} \toprule
k/j&3&5&7&9&11&13&15&17&19&21&23&25&27&29&31\\ \midrule
2&1/4&1/3&1/3&3/10&1/3&1/3&5/16&1/3&1/3&7/22&1/3&1/3&9/28&1/3&1/3\\
4&2/7&1/3&3/8&2/5&2/5&5/14&3/8&7/18&2/5&2/5&3/8&5/13&11/28&2/5&2/5\\
6&1/3&2/7&4/13&2/5&5/12&3/7&3/7&9/23&2/5&9/22&5/12&11/26&3/7&3/7&13/32\\
8&4/11&1/3&1/3&3/10&7/19&3/7&7/16&4/9&4/9&12/29&12/31&13/33&3/7&13/30&7/16\\
10&5/13&1/3&4/11&6/19&1/3&5/14&2/5&4/9&9/20&5/11&5/11&3/7&15/37&5/13&16/41\\
12&2/5&6/17&3/8&5/13&4/13&8/25&3/8&5/13&13/31&5/11&11/24&6/13&6/13&18/41&18/43\\
14&7/17&7/19&1/3&2/5&2/5&1/3&5/16&7/18&2/5&2/5&16/37&6/13&13/28&7/15&7/15\\
16&8/19&8/21&6/17&2/5&7/17&11/29&10/31&1/3&2/5&9/22&7/17&17/41&19/43&7/15&15/32\\
18&3/7&9/23&7/19&1/3&5/12&8/19&7/19&6/19&12/37&5/13&5/12&8/19&2/5&20/47&22/49\\
20&10/23&2/5&3/8&8/21&12/31&3/7&3/7&8/21&1/3&7/22&3/8&11/26&3/7&3/7&7/17\\
22&11/25&11/27&11/29&9/23&1/3&3/7&10/23&16/39&9/23&14/43&1/3&5/13&3/7&13/30&10/23\\
24&4/9&12/29&12/31&2/5&2/5&14/37&7/16&11/25&2/5&2/5&8/25&16/49&11/28&22/53&7/16\\
26&13/29&13/31&13/33&2/5&11/27&1/3&17/41&4/9&4/9&11/27&19/49&1/3&9/28&2/5&13/32\\
28&14/31&14/33&2/5&11/29&12/29&16/41&3/8&4/9&13/29&3/7&12/29&11/29&18/55&1/3&23/59\\
30&5/11&3/7&15/37&12/31&5/12&13/31&1/3&19/47&9/20&14/31&5/12&13/31&12/31&10/31&20/61\\
32&16/35&16/37&16/39&13/33&17/43&14/33&18/47&7/18&22/51&5/11&5/11&11/26&14/33&13/33&1/3\\
34&17/37&17/39&17/41&2/5&2/5&3/7&3/7&1/3&2/5&5/11&16/35&26/59&3/7&3/7&2/5\\
36&6/13&18/41&18/43&2/5&15/37&3/7&16/37&14/37&2/5&8/19&11/24&17/37&3/7&16/37&28/67\\
38&19/41&19/43&19/45&19/47&16/39&20/51&17/39&23/55&1/3&9/22&27/61&6/13&6/13&13/30&17/39\\
40&20/43&4/9&20/47&20/49&17/41&21/53&7/16&18/41&16/41&24/61&17/41&6/13&19/41&31/69&7/16\\
42&7/15&21/47&3/7&7/17&5/12&18/43&8/19&19/43&25/61&1/3&5/12&29/67&13/28&20/43&32/73\\
44&22/47&22/49&22/51&22/53&2/5&19/45&23/59&4/9&4/9&2/5&26/67&19/45&32/71&7/15&7/15\\
46&23/49&23/51&23/53&23/55&19/47&20/47&24/61&4/9&21/47&19/47&1/3&11/26&20/47&7/15&22/47\\
48&8/17&24/53&24/55&8/19&20/49&3/7&3/7&27/65&22/49&10/23&28/71&5/13&3/7&34/77&15/32\\
50&25/53&5/11&25/57&25/59&7/17&3/7&22/51&7/18&9/20&23/51&7/17&1/3&3/7&22/51&37/81\\
52&26/55&26/57&26/59&26/61&22/53&2/5&23/53&9/23&31/71&24/53&32/75&30/77&11/28&13/30&23/53\\
54&9/19&27/59&27/61&3/7&5/12&23/55&24/55&30/71&30/73&5/11&5/11&23/55&1/3&35/83&24/55\\
56&28/59&28/61&4/9&28/65&28/67&8/19&7/16&25/57&2/5&5/11&26/57&8/19&22/57&2/5&7/16\\
58&29/61&29/63&29/65&29/67&29/69&25/59&31/73&26/59&23/59&34/79&27/59&37/83&25/59&1/3&37/89\\
60&10/21&6/13&30/67&10/23&30/71&26/61&2/5&27/61&33/79&9/22&11/24&28/61&26/61&24/61&36/91\\
62&31/65&31/67&31/69&31/71&31/73&3/7&3/7&4/9&4/9&33/83&38/85&29/63&39/89&3/7&1/3\\
64&32/67&32/69&32/71&32/73&32/75&3/7&28/65&4/9&29/65&2/5&37/87&6/13&6/13&28/65&2/5\\
66&11/23&33/71&33/73&11/25&3/7&28/67&29/67&35/83&30/67&12/29&5/12&6/13&31/67&29/67&41/97\\
68&34/71&34/73&34/75&34/77&34/79&29/69&10/23&2/5&31/69&39/89&36/91&41/93&32/69&44/97&10/23\\
70&35/73&7/15&5/11&35/79&35/81&30/71&31/71&37/87&9/20&32/71&37/93&30/71&13/28&33/71&31/71\\
72&12/25&36/77&36/79&4/9&36/83&31/73&7/16&32/73&40/91&33/73&30/73&11/26&5/11&34/73&46/103\\
74&37/77&37/79&37/81&37/83&37/85&32/75&38/89&11/25&13/31&34/75&42/97&13/33&44/101&7/15&7/15\\
76&38/79&38/81&38/83&38/85&38/87&3/7&3/7&34/77&2/5&5/11&5/11&40/101&3/7&7/15&36/77\\
\bottomrule
\end{tabular}
\end{table}
\end{landscape}

\begin{landscape}
\begin{table}
\caption{$\kappa(\{1,j,k\})$ when $j$ is even}
\label{kappa_j_even}
\centering \small
\begin{tabular}{*{16}{r}} \toprule
k/j&2&4&6&8&10&12&14&16&18&20&22&24&26&28&30\\ \midrule
3&1/4&2/7&1/3&4/11&5/13&2/5&7/17&8/19&3/7&10/23&11/25&4/9&13/29&14/31&5/11\\
4&1/3&2/5&2/5&1/3&4/11&5/13&2/5&2/5&7/19&8/21&9/23&2/5&2/5&11/29&12/31\\
5&1/3&1/3&2/7&1/3&1/3&6/17&7/19&8/21&9/23&2/5&11/27&12/29&13/31&14/33&3/7\\
6&2/7&2/5&3/7&3/7&3/8&1/3&2/5&7/17&8/19&3/7&3/7&2/5&11/27&12/29&13/31\\
7&1/3&3/8&4/13&1/3&4/11&3/8&1/3&6/17&7/19&3/8&11/29&12/31&13/33&2/5&15/37\\
8&1/3&1/3&3/7&4/9&4/9&2/5&4/11&1/3&5/13&3/7&10/23&11/25&4/9&4/9&8/19\\
9&3/10&2/5&2/5&3/10&6/19&5/13&2/5&2/5&1/3&8/21&9/23&2/5&2/5&11/29&12/31\\
10&1/3&4/11&3/8&4/9&5/11&5/11&5/12&5/13&4/11&1/3&3/8&7/17&4/9&13/29&14/31\\
11&1/3&2/5&5/12&7/19&1/3&4/13&2/5&7/17&5/12&12/31&1/3&2/5&11/27&12/29&5/12\\
12&4/13&5/13&1/3&2/5&5/11&6/13&6/13&3/7&2/5&8/21&5/13&1/3&10/27&2/5&3/7\\
13&1/3&5/14&3/7&3/7&5/14&8/25&1/3&11/29&8/19&3/7&3/7&14/37&1/3&16/41&13/31\\
14&1/3&2/5&2/5&4/11&5/12&6/13&7/15&7/15&7/16&7/17&9/23&2/5&2/5&1/3&12/31\\
15&5/16&3/8&3/7&7/16&2/5&3/8&5/16&10/31&7/19&3/7&10/23&7/16&17/41&3/8&1/3\\
16&1/3&2/5&7/17&1/3&5/13&3/7&7/15&8/17&8/17&4/9&8/19&2/5&11/27&7/17&9/23\\
17&1/3&7/18&9/23&4/9&4/9&5/13&7/18&1/3&6/19&8/21&16/39&11/25&4/9&4/9&19/47\\
18&6/19&7/19&8/19&5/13&4/11&2/5&7/16&8/17&9/19&9/19&9/20&3/7&9/22&12/29&13/31\\
19&1/3&2/5&2/5&4/9&9/20&13/31&2/5&2/5&12/37&1/3&9/23&2/5&4/9&13/29&9/20\\
20&1/3&8/21&3/7&3/7&1/3&8/21&7/17&4/9&9/19&10/21&10/21&5/11&10/23&5/12&2/5\\
21&7/22&2/5&9/22&12/29&5/11&5/11&2/5&9/22&5/13&7/22&14/43&2/5&11/27&3/7&14/31\\
22&1/3&9/23&3/7&10/23&3/8&5/13&9/23&8/19&9/20&10/21&11/23&11/23&11/24&11/25&11/26\\
23&1/3&3/8&5/12&12/31&5/11&11/24&16/37&7/17&5/12&3/8&1/3&8/25&19/49&12/29&5/12\\
24&8/25&2/5&2/5&11/25&7/17&1/3&2/5&2/5&3/7&5/11&11/23&12/25&12/25&6/13&4/9\\
25&1/3&5/13&11/26&13/33&3/7&6/13&6/13&17/41&8/19&11/26&5/13&16/49&1/3&11/29&13/31\\
26&1/3&2/5&11/27&4/9&4/9&10/27&2/5&11/27&9/22&10/23&11/24&12/25&13/27&13/27&13/28\\
27&9/28&11/28&3/7&3/7&15/37&6/13&13/28&19/43&2/5&3/7&3/7&11/28&9/28&18/55&12/31\\
28&1/3&11/29&12/29&4/9&13/29&2/5&1/3&7/17&12/29&5/12&11/25&6/13&13/27&14/29&14/29\\
29&1/3&2/5&3/7&13/30&5/13&18/41&7/15&7/15&20/47&3/7&13/30&22/53&2/5&1/3&10/31\\
30&10/31&12/31&13/31&8/19&14/31&3/7&12/31&9/23&13/31&2/5&11/26&4/9&13/28&14/29&15/31\\
31&1/3&2/5&13/32&7/16&16/41&18/43&7/15&15/32&22/49&7/17&10/23&7/16&13/32&23/59&20/61\\
32&1/3&13/33&14/33&2/5&5/11&5/11&13/33&1/3&8/19&14/33&11/27&3/7&13/29&7/15&15/31\\
33&11/34&13/34&7/17&15/34&18/43&2/5&21/47&8/17&8/17&23/53&2/5&11/25&15/34&7/17&13/34\\
34&1/3&2/5&3/7&3/7&5/11&16/35&5/12&2/5&5/13&3/7&3/7&12/29&13/30&14/31&15/32\\
35&1/3&7/18&5/12&4/9&4/9&5/13&3/7&8/17&17/36&5/11&8/19&25/59&4/9&4/9&5/12\\
36&12/37&2/5&3/7&16/37&10/23&17/37&11/25&15/37&1/3&3/7&16/37&2/5&13/31&7/16&5/11\\
37&1/3&15/38&8/19&4/9&17/38&19/49&7/17&24/53&9/19&9/19&26/59&25/61&4/9&17/38&29/67\\
38&1/3&5/13&16/39&17/39&5/12&6/13&6/13&16/39&11/28&8/21&10/23&17/39&11/27&14/33&15/34\\
39&13/40&2/5&17/40&20/47&9/20&7/17&2/5&24/55&9/19&19/40&28/61&3/7&2/5&13/29&9/20\\
40&1/3&16/41&17/41&18/41&2/5&6/13&19/41&3/7&17/41&1/3&13/31&18/41&14/33&12/29&3/7\\
\bottomrule
\end{tabular}
\end{table}
\end{landscape}

\end{document}